\definecolor{red}{rgb}{1,0,0}
\def\v{{\bm v}}
\def\u{{\bm u}}
\def\f{{\bm f}}
\def\ep{{\bm \epsilon}}
\def\sig{{\bm \sigma}}
\def\bU{{\boldsymbol U}}
\def\bV{{\boldsymbol V}}
\def\bu{{\boldsymbol u}}
\renewcommand{\div}{\operatorname{div}}
\newcommand{\Lbrack}{\lbrack\!\lbrack} 
\newcommand{\Rbrack}{\rbrack\!\rbrack} 
\newcommand{\R}[1]{I\!\!R^{#1}}
\newcommand{\vek}[1]{\boldsymbol{#1}}
\newtheorem{lemma}{Lemma}
\newtheorem{theorem}{Theorem}
\newtheorem{corollary}[theorem]{Corollary}
\newtheorem{remark}[theorem]{Remark}
\def\Xi{{X}_h}
\begin{document}

\title[Parameter-robust stability of Biot's model]{Parameter-robust stability of classical three-field formulation of Biot's consolidation model}

\author[Qingguo Hong and Johannes Kraus]{Qingguo~Hong and Johannes~Kraus}

\address{Department of Mathematics,
        Pennsylvania State University, State College, PA 16802, U.S.A.}
\email{huq11@psu.edu}

\address{Faculty of Mathematics,
University of Duisburg-Essen,
Thea-Leymann-Str.~9, 45127 Essen, Germany}
\email{johannes.kraus@uni-due.de}

\keywords{Biot's consolidation model, parameter-robust stability, classical three-field formulation, stable discretizations}

\subjclass{65F10, 65N20, 65N30}

\maketitle

\begin{abstract}
This paper is devoted to the stability analysis of a classical three-field formulation of Biot's consolidation
model where the unknown variables are the displacements, fluid flux (Darcy velocity), and pore pressure.
Specific parameter-dependent norms provide the key {in establishing} the full parameter-robust inf-sup
stability
of the continuous problem. {Therefore,} {stability results presented here are uniform not only} with
respect to the Lam\'e
parameter $\lambda$, but also with respect to all the other model parameters. This allows for the
construction of a uniform block diagonal preconditioner {within} the framework of operator preconditioning.
Stable discretizations that meet the required conditions for full robustness and
%preserve
{guarantee} mass conservation,
%exactly
{both locally and globally,}
%and globally
%on a discrete level
are discussed and corresponding optimal error
estimates proven.
\end{abstract}

%\section{Introduction: Three-field formulation of Biot's consolidation model}
\section{Introduction: Biot's consolidation model}

Poroelastic models describe mechanical deformation and fluid flow in porous media. They have a wide
range of applications in medicine, biophysics and geosciences such as the computation of intracranial pressure,
trabecular bone stiffness under different loading conditions, reservoir simulation, waste repository performance,
${\rm CO}_2$ sequestration, consolidation of soil under surface loads, subsidence due to fluid withdrawal and
many others, see, e.g.~\cite{Smith2007interstitial,Stoverud2016poroelastic,Wang2000theory,Detournay1993fundamentals}.
 
A classical and widely used model has been introduced by Biot~\cite{Biot1941general,Biot1955theory} and is
based on the following assumptions:
\begin{itemize}
\item[(i)] the porous medium is saturated by fluid and the temperature is constant,
\item[(ii)] the fluid in the porous medium is (nearly) incompressible,
\item[(iii)] the solid skeleton (matrix) is formed by an elastic material and deformations and strains are relatively small,
\item[(iv)] the fluid flow is driven by Darcy's law (laminar flow).
\end{itemize}

For homogeneous isotropic linear elastic porous media, the Biot model in an open domain
$\Omega \subset \R{d}$, $d=2,3$, comprises the following system of partial differential equations ({PDEs}):
\begin{subequations}\label{eq:Biot}
\begin{eqnarray}
- \div \sig +  c_{up} \nabla p &=& \boldsymbol f \qquad \mbox{in } \Omega \times (0,T), \label{Biot1} \\
\boldsymbol v &=& -K \nabla p \qquad \mbox{in } \Omega \times (0,T), \label{Biot2} \\
c_{pu} \div \dot{\boldsymbol u} - \div \boldsymbol v  + c_{pp} \dot p &=& g \qquad \mbox{in } \Omega \times (0,T), \label{Biot3} \\
\ep(\u)&=&\displaystyle \frac{1}{2}(\nabla \u+(\nabla \u)^T), \label{compatibility} \\
\sig &=& 2 \mu \ep(\u) + \lambda \div (\u) \boldsymbol I. \label{constitutive}
\end{eqnarray}
\end{subequations}
Here $\lambda$ and $\mu$ denote the Lam\'e parameters which are defined by 
$$
\lambda:=\frac{\nu E}{(1+\nu)(1-2\nu)}, \quad
\mu:=\frac{E}{2(1+\nu)}
$$
in terms of the modulus of elasticity (Young's modulus) $E$ and the Poisson ratio $\nu\in[0,1/2)$.

The constant $c_{up}=c_{pu}=\alpha$ coupling the pore pressure $p$ and the displacement variable $\u$
is the Biot-Willis constant, $K$ is the hydraulic conductivity, given by the quotient between the permeability
of the porous medium $\kappa$ and the viscosity of the fluid $\eta$;
$\boldsymbol I$ denotes the identity tensor and $\sig$ and $\ep$ are the effective stress and strain tensors,
respectively, which are related to each other via the constitutive equation~\eqref{constitutive}; The strain tensor
 $\ep(\u)$ is given by the symmetric part of the gradient of the displacement field as defined in the compatibility
condition~\eqref{compatibility}.
The time derivatives of $\u$ and $p$ in the continuity equation~\eqref{Biot3} are denoted by $\dot{\u}$ and $\dot p$.
Finally, $\v$ denotes the fluid flux, sometimes also called percolation velocity of the fluid, which is assumed to
be proportional to the (negative) pressure gradient as expressed by Darcy's law~\eqref{Biot2}.
The right hand side $\boldsymbol f$ in the equilibrium equation~\eqref{Biot1} represents the density of the applied
body forces and the source term $g$ in~\eqref{Biot3} a forced fluid extraction or injection.

The system~\eqref{eq:Biot} is completed by proper boundary and initial conditions, e.g.,
\begin{subequations}\label{eq:Biot_BC}
\begin{eqnarray}
p(\vek{x},t) &=& p_D(\vek{x},t)  \qquad \mbox{for } \vek{x} \in \Gamma_{p,D}, \quad t > 0 , \\ 
\v(\vek{x},t) \cdot {\bm n} (\vek{x}) &=& q_N(\vek{x},t)  \qquad \mbox{for }  \vek{x} \in \Gamma_{p,N}, \quad t > 0 , \\ 
\u(\vek{x},t) &=& {\bu}_D(\vek{x},t)  \qquad \mbox{for }  \vek{x} \in \Gamma_{\vek{u},D}, \quad t > 0 , \\ 
{\bm \sigma(\vek{x},t)} \, {\vek{n}} (\vek{x}) &=& {\bm g}_N(\vek{x},t)  \qquad \mbox{for }  \vek{x} \in \Gamma_{\vek{u},N}, \quad t > 0 ,
\end{eqnarray}
%\begin{eqnarray}
%p(\boldsymbol x,t)=p, &\quad \sig \boldsymbol n &= \boldsymbol h \qquad \mbox{on } \Gamma_1, \\
%\u = \boldsymbol 0, &\qquad K (\nabla p) \cdot \boldsymbol n &= 0  \, \qquad \mbox{on } \Gamma_2,
%\end{eqnarray}
\end{subequations}
where
$\Gamma_{p,D} \cap \Gamma_{p,N} = \emptyset$,
$\overline{\Gamma}_{p,D}\cup \overline{\Gamma}_{p,N}=\Gamma=\partial{\Omega}$
and
$\Gamma_{\vek{u},D} \cap \Gamma_{\vek{u},N} = \emptyset$,
$\overline{\Gamma}_{\vek{u},D} \cup \overline{\Gamma}_{\vek{u},N}=\Gamma$;
Initial conditions at the time $t=0$ to complement the boundary conditions~\eqref{eq:Biot_BC}
are given by
\begin{subequations}\label{eq:Biot_IC}
\begin{eqnarray}
p(\vek{x},0) &=& p_0(\vek{x}) \qquad \vek{x} \in \Omega, \\
\u (\vek{x},0) &=& \u_0(\vek{x}) \qquad \vek{x} \in \Omega.
\end{eqnarray}
\end{subequations}

Making use of the constitutive equation~\eqref{constitutive} to eliminate the stress variable from the system~\eqref{eq:Biot}
results in the classical three-field formulation of the Biot model.

A  common way to solve the time-dependent problem numerically, is then to discretize it in time and solve a static problem
in each time step. Using the backward Euler method for time discretization, in this way one obtains a three-by-three block
system of time-step equations
\begin{equation}\label{eq:1}
\mathcal{A}\left[\begin{array}{c}\u^k \\ \v^k \\ p^k
\end{array}\right]=
\left[\begin{array}{c} \f^k \\ {\bm 0} \\ \tilde{g}^k \end{array}\right]
\end{equation}
where
\begin{equation}\label{eq:2}
\mathcal{A}:=\left[\begin{array}{ccc}
-2 \mu \div \ep-\lambda \nabla \div & 0 & c_{up}\nabla \\
0 & \tau K^{-1} & \tau \nabla \\
-c_{pu} \div & -\tau \div & -c_{pp} I  \end{array}\right]
\end{equation}
for the unknown time-step functions
\begin{subequations}\label{eq:spaces}
\begin{eqnarray*}
\u^k=\u(\vek{x},t_k) \, \in \, \bU &:=& \{ \u \in H^1(\Omega)^d:  \u={\u}_D \mbox{ on } \Gamma_{\u,D} \} , \\
\v^k=\v(\vek{x},t_k) \, \in \, \bV &:=&  \{ \v \in H(\div,\Omega):  \v \cdot \vek{n} = q_N \mbox{ on } \Gamma_{p,N} \}, \\
p^k=p(\vek{x},t_k) \, \in \, P &:=& L^2(\Omega),
\end{eqnarray*}
\end{subequations}
and the right hand side time-step functions $\f^k=\f(\vek{x},t_k)$ and
$\tilde{g}^k=-\tau g(\vek{x},t_k) - c_{pu} \div (\u^{k-1}) - c_{pp} p^{k-1}$
at any given time moment $t=t_k=t_{k-1}+\tau$.
%
%{Here $H^1(\Omega)^d$ and $L^2(\Omega)$ denote the space of vector-valued $H^1$-functions and
%the space of square Lebesgue integrable functions, respectively. Moreover, the space $H(\div ;\Omega)$ is
%defined as
%$$
%H(\div ;\Omega):=\{\bm v \in L^2(\Omega): \div  \bm v\in L^2(\Omega)\}.
%$$
%The norms $\|\cdot\|_1$ and $\|\cdot\|_{\div}$ are defined as $\|\bm u\|^2_1=\|\bm u\|^2+\|\nabla \bm u\|^2$ and
%$\|\bm v\|^2_{\div}:=\|\bm v\|^2+\|\div \bm v\|^2$.}
%
{As in the remainder of this paper we will consider the static problem~\eqref{eq:1}--\eqref{eq:2},
for convenience, we will drop the superscript for the time-step functions, that is, denote $\u^k$, $\v^k$
and $p^k$ by $\u$, $\v$ and $p$, respectively.}

{
%We will use the following standard Sobolev spaces and norms.
Following the standard notation, $L^2(\Omega)$ denotes the space of square Lebesgue
integrable functions equipped with the standard $L^2$ norm $\|\cdot\|$ and $H^1(\Omega)^d$ the
space of vector-valued $H^1$-functions equipped with the norm $\| \cdot \|_1$ defined by
$\|\bm u\|^2_1:=\|\bm u\|^2+\|\nabla \bm u\|^2$. Moreover, %we will refer to the space
$
H(\div ;\Omega):=\{\bm v \in L^2(\Omega)^d: \div  \bm v\in L^2(\Omega)\}
$
where the standard Sobolev norm $\|\cdot\|_{\div}$ is defined by
$\|\bm v\|^2_{\div}:=\|\bm v\|^2+\|\div \bm v\|^2$.
}

Frequently we will consider the case in which $\Gamma_{\u,D}=\Gamma_{p,N}=\Gamma$ and ${\u}_D={\bm 0}$, $q_N=0$
in which we write $\bU=H^1_0(\Omega)^d$ and  $\bV=H_0(\div, \Omega)$. In order to determine the solution for the
pressure variable $p$ uniquely one can set $P= L^2_0(\Omega):=\{ p \in L^2(\Omega) : \int_{\Omega} p \, d\vek{x} = 0 \}$. 

In many applications the variations of the model parameters are quite large. In geophysical applications, the permeability
typically varies in the range from $10^{-9}$ to $10^{-21} m^2$ whereas Young's modulus is typically in the order of GPa
and the Poisson ratio in the range $0.1-0.3$, see~\cite{Wang2000theory,Lee2016parameter,Coussy2004poromechanics}.
Soft tissue of  the central nervous system on the other hand has a permeability of about $10^{-14}$ to $10^{-16} m^2$
whereas Young's modulus is typically in the order of kPa and the Poisson ratio in the range $0.3$ to almost $0.5$, see
\cite{Smith2007interstitial,Stoverud2016poroelastic}. For that reason it is important that not only the formulation of the problem
but also the numerical methods for its solution are stable over the whole range of values of the parameters in the model.

The stability of the time discretization has been studied in~\cite{Axelsson2012stable} and will not be addressed here.
Instead we will focus on the issue of inf-sup stable finite element discretizations of the static problem~\eqref{eq:1}--\eqref{eq:2}.
It is a well known fact that the LBB condition, see~\cite{Babuska1971error,Brezzi1974existence}, plays the crucial role
in  the well-posedness analysis of the continuous problem and its discrete counterparts arising from mixed
finite element discretizations. It is also the key tool in deriving a priori error estimates. Inf-sup stability for the Darcy problem
as well as for the Stokes and linear elasticity problems is well understood and various stable mixed discretizations of either
of these systems of {PDEs} have been proposed over the years, see, e.g.~\cite{Boffi2013mixed} and the references therein.

Biot's model of poroelasticity combines theses equations and the parameter-robust stability of its three-field
formulation becomes more delicate as we will see in the next sections. Alternative formulations that can be proven to
be stable independently of the model parameters (in certain norms) include a two-field formulation for the displacements
and pore pressure, see~{\cite{Lee2016parameter,adler2017robust}}, and a new three-field formulation that--besides the
displacements--{introduces} two pressure unknowns, one for the fluid pressure and one for the total pressure defined
as a weighted sum of fluid and solid pressure \cite{Lee2016parameter}.

Compared to the new three-field formulation presented in~\cite{Lee2016parameter}, the classic three-field formulation
of Biot's consolidation model keeps Darcy's law {in order to guarantee} fluid mass conservation.
Recently, a four-field formulation has been proposed in which the stress tensor is kept as a variable in the system,
see~\cite{Lee2016robust}, and the error analysis there is robust with respect to $\lambda$, but not uniform with
respect to the other parameters such as $\tau$ and $K$. 

{Nonconforming finite elements have been shown to be beneficial with regard to reducing pressure oscillations}
in computations based on the classical three-field formulation, see~\cite{Hu2017nonconforming}. The lowest %possible
approximation order, consisting of Crouzeix-Raviart finite elements for the displacements, lowest-order Raviart-Thomas
elements for the Darcy velocity, and a piecewise constant approximation of the pressure unknown, in combination with
a mass-lumping technique for the Raviart-Thomas elements results in a computationally efficient method. However, the
norms defined in this {work}, and in many others, do not {allow to establish} the full parameter-robust stability
for which we aim.

%%{In this work, by introducing proper norms which play crucial role in our analysis, we proved the parameter-robust
%%stability of the classic three-field formulation of Biot's consolidation model.
In the present paper, we establish this {full} parameter-robust stability for the classic three-field formulation of Biot's
consolidation model. Crucial in our analysis is the definition of proper norms for which we prove that the constants in
the related inf-sup conditions do not depend on any of the model parameters.
Further, we propose a discretization that preserves fluid mass conservation {at a} discrete level. We also prove
the full parameter-robust stability of the discretized problem and of course the related optimal error estimates.
The remainder of the paper is organized as follows.

In Section~\ref{sec:com_unstable_disc}, we briefly revisit non-uniform stability results, and
%with respect to certain natural norms, get a motivation,
{make some useful observations which motivate the subsequent analysis.}
In Section~\ref{sec:par_rob_stab_model} we {introduce} the parameter-dependent norms based on which we establish
the parameter-robust stability of the weak formulation of the continuous problem~\eqref{eq:1}--\eqref{eq:2}.
Section~\ref{sec:uni_stab_disc_model} analyzes mixed finite element discretizations that provide discrete parameter-robust
inf-sup stability and full fluid mass conservation.
Applying the theory of operator preconditioning, see~\cite{Mardal2011preconditioning}, the results from
{Sections~\ref{sec:par_rob_stab_model} and~\ref{sec:uni_stab_disc_model}}
{imply the uniformity (parameter-robustness) of the (canonical) norm-equivalent block-diagonal preconditioners.}
In Section~\ref{sec:error_estimates} we use our findings to derive robust optimal a priori error estimates.
Finally, Section~\ref{conclusion} gives some {concluding remarks}. 

Throughout this paper, the hidden constants in $\lesssim,~\gtrsim$~and $\eqsim$ are 
independent of the parameters $\mu,\lambda,c_{up},\tau, K, c_{pu},c_{pp}$ and the mesh size $h$.
Hence the hidden constants in $\lesssim,~\gtrsim$~and $\eqsim$ are independent of 
$\lambda, R_p, \alpha_p$ and the mesh size $h$.

\section{A revisit of non-uniform stability results}\label{sec:com_unstable_disc}
We {begin} our stability analysis with recasting the equations~\eqref{eq:1}--\eqref{eq:2}.
We first divide all the parameters in the model~\eqref{eq:1}--\eqref{eq:2} by $2 \mu$ herewith eliminating the parameter $\mu$.
That is, we make the substitutions
$$2\mu \rightarrow 1, \ \lambda/2\mu\rightarrow \lambda, \ c_{up}/2\mu\rightarrow c_{up}, \
\bm f/2\mu\rightarrow \bm f, \ \tau/2\mu\rightarrow \tau, \ c_{pp}/2\mu \rightarrow c_{pp}, \ g/2\mu \rightarrow g.$$
Herewith, equation \eqref{eq:1} becomes 
\begin{subequations}\label{eq:3}
\begin{eqnarray}
-\div \ep (\boldsymbol u)-\lambda \nabla \div \boldsymbol u +  c_{up} \nabla p&=&\boldsymbol f \label{eq:3a} , \\
  \tau K^{-1} \boldsymbol v+ \tau \nabla p&=&\boldsymbol 0 ,  \label{eq:3b} \\
-c_{pu} \div \boldsymbol u  -\tau \div \boldsymbol v  -c_{pp} p&=&g . \label{eq:3c}
\end{eqnarray}
\end{subequations}
Now let $\tilde{\boldsymbol u}=c_{pu}\boldsymbol u, \tilde {\boldsymbol v}=\tau \boldsymbol v, \tilde p=c_{pu}^2 p, \tilde{\boldsymbol f}=c_{pu}\boldsymbol f$
and divide equation \eqref{eq:3b} by $\tau$ to get 
\begin{subequations}\label{eq:4}
\begin{eqnarray}
-\div \ep (\boldsymbol {\tilde u})-\lambda \nabla \div \boldsymbol {\tilde u} + \nabla \tilde p&=\boldsymbol {\tilde f} , \label{eq:4a} \\
%%c^2_{pu}
{\alpha^2}
\tau^{-1} K^{-1} \boldsymbol {\tilde v}+  \nabla {\tilde p}&=\boldsymbol 0 , \label{eq:4b} \\
-\div \boldsymbol {\tilde u}  -\div \boldsymbol {\tilde v}  -c_{pp} 
%%c^{-2}_{pu}
{\alpha^{-2}}
\tilde p&=g  ,\label{eq:4c}
\end{eqnarray}
\end{subequations}
where we have also used that {$c_{up}=c_{pu}=\alpha$}.
For convenience we denote 
$$R^{-1}_p={\alpha^2}\tau^{-1} K^{-1}, \quad \alpha_p=c_{pp} \, {\alpha^{-2}},$$
and further assume that the range of the parameters is
%$1\leq\lambda\leq \infty, 0\leq R^{-1}_p\leq \infty, 0\leq \alpha_p \leq \infty$. These assumptions are very general and reasonable. 
% In the subsequent, in order to simplify the notation, we skip the ``tilde'' symbol, i.e.,
% $\boldsymbol {\tilde u} \rightarrow \boldsymbol u, \boldsymbol {\tilde v}\rightarrow \boldsymbol v, \boldsymbol {\tilde f}\rightarrow \boldsymbol f, \tilde p\rightarrow p$,
$$
{\lambda \ge 1, \quad  R^{-1}_p \ge 0, \quad \alpha_p \ge 0.}
$$
%Finally obtain the system
%\begin{subequations}\label{eq:5}
%\begin{eqnarray}
%-\div \ep (\boldsymbol u)-\lambda \nabla \div \boldsymbol  u + \nabla  p&=\boldsymbol f \label{eq:5a}\\
%  R_p^{-1}\boldsymbol  v+  \nabla  p&=\boldsymbol 0 \label{eq:5b} \\
%-\div \boldsymbol u  -\div \boldsymbol v  -\alpha_p  p&=g \label{eq:5c}
%\end{eqnarray}
%\end{subequations}
%
These assumptions are very general and reasonable. 

In the subsequent, in order to simplify the notation, we skip the ``tilde'' symbol, i.e., we make the substitutions
$\boldsymbol {\tilde u} \rightarrow \boldsymbol u, \boldsymbol {\tilde v}\rightarrow \boldsymbol v, \boldsymbol {\tilde f}\rightarrow \boldsymbol f, \tilde p\rightarrow p$,
{and finally write the system, without loss of generality, in the form}
\begin{subequations}\label{eq:5}
\begin{eqnarray}
-\div \ep (\boldsymbol u)-\lambda \nabla \div \boldsymbol  u + \nabla  p&=\boldsymbol f {,} \label{eq:5a}\\
  R_p^{-1}\boldsymbol  v+  \nabla  p&=\boldsymbol 0 {,} \label{eq:5b} \\
-\div \boldsymbol u  -\div \boldsymbol v  -\alpha_p  p&=g,\label{eq:5c}
\end{eqnarray}
\end{subequations}
{{or, in short notation,} 
\begin{equation}\label{eq:operator:A }
A\left[\begin{array}{c}\u \\ \v \\ p
\end{array}\right]=
\left[\begin{array}{c} \bm f \\ {\bm 0} \\ \tilde{g} \end{array}\right]
\end{equation}
where
\begin{equation}\label{operator:A}
A:=\left[\begin{array}{ccc}
-\div \ep-\lambda \nabla \div & 0 & \nabla \\
0 & R_p^{-1}I & \nabla \\
-\div & - \div & -\alpha_{p} I  \end{array}\right].
\end{equation}
}
The weak formulation of \eqref{eq:5a}--\eqref{eq:5c} reads:
Find $(\boldsymbol u, \boldsymbol v, p)\in \boldsymbol U\times\boldsymbol V\times P$, such that for any
$(\boldsymbol w, \boldsymbol z, q)\in \boldsymbol U\times\boldsymbol V\times P$
\begin{subequations}\label{eq:8}
\begin{eqnarray}
(\ep (\boldsymbol u), \ep (\boldsymbol w)) +\lambda ( \div \boldsymbol  u, \div \boldsymbol  w)- ( p, \div \boldsymbol  w)&=&(\boldsymbol f, \boldsymbol w),\label{eq:8a}\\
(R_p^{-1}\boldsymbol  v, \boldsymbol  z)- (p, \div \boldsymbol  z)&=& 0, \label{eq:8b} \\
-(\div \boldsymbol u,q)  -(\div \boldsymbol v,q)  -\alpha_p  (p,q)&=&(g,q). \label{eq:8c}
\end{eqnarray}
\end{subequations}

Motivated by the work~\cite{lipnikov2002numerical},
{let us first consider} the Hilbert spaces $\bU=H^1_0(\Omega)^d, \bV=H_0(\div, \Omega),$ and $P= L^2_0(\Omega)$
with the {\it natural norms} defined by
\begin{eqnarray}
(\boldsymbol u, \boldsymbol w)_{\bU}&=&(\ep (\boldsymbol u), \ep (\boldsymbol w))+\lambda(\div \boldsymbol u, \div \boldsymbol w),\label{eq:norm4n}\\
(\boldsymbol v, \boldsymbol z)_{\bV}&=&(R^{-1}_p \boldsymbol v,\boldsymbol z)+R_p^{-1} (\div \boldsymbol v, \div \boldsymbol z),\label{eq:norm5n}\\
(p, q)_P&=&(p,q).\label{eq:norm6n}
\end{eqnarray}

Before we study the Biot's equations, we recall the  following well known results,
see, e.g.~\cite{ Brezzi1974existence,Boffi2013mixed}.
%{By the definitions~\eqref{eq:7a}--\eqref{eq:7c}, we have the following well known result, see,
%e.g.~\cite{ Brezzi1974existence,Boffi2013mixed}}
\begin{lemma}\label{Hdiv:inf-sup}
{There {exists} a positive constant $\beta_v>0$, such that 
\begin{equation}
\inf_{q\in P}\sup_{\bm v\in \bm V}\frac{(\div \bm v, q)}{\|\bm v\|_{\div}\|q\|}\geq \beta_d.
\end{equation}}
\end{lemma}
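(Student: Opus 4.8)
To prove Lemma~\ref{Hdiv:inf-sup} the plan is to follow the classical argument for the LBB condition of the $H(\div,\Omega)$--$L^2(\Omega)$ pair: for each given $q$ we construct an explicit Fortin-type test field $\bm v\in\bm V$ for which $(\div \bm v,q)$ is comparable to $\|q\|^2$ while $\|\bm v\|_{\div}$ is controlled by $\|q\|$. Here $\bm V=H_0(\div,\Omega)$ and $P=L^2_0(\Omega)$ as introduced above; the case in which part of $\partial\Omega$ carries a Dirichlet condition for the pressure (so that no zero-mean constraint on $P$ is needed) is entirely analogous, with the auxiliary Neumann problem below replaced by the corresponding mixed boundary value problem.

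Concretely, given $q\in L^2_0(\Omega)$, I would first solve the auxiliary problem: find $\varphi\in H^1(\Omega)$ with $\int_\Omega\varphi\, d\bm x=0$ such that $\Delta\varphi=q$ in $\Omega$ and $\partial_{\bm n}\varphi=0$ on $\partial\Omega$, in the weak sense $-(\nabla\varphi,\nabla\psi)=(q,\psi)$ for all $\psi\in H^1(\Omega)$. Testing with $\psi=\varphi$ and invoking the Poincar\'e--Wirtinger inequality $\|\varphi\|\le C_P\|\nabla\varphi\|$ yields $\|\nabla\varphi\|^2=-(q,\varphi)\le\|q\|\,\|\varphi\|\le C_P\|q\|\,\|\nabla\varphi\|$, hence $\|\nabla\varphi\|\le C_P\|q\|$. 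Setting $\bm v:=\nabla\varphi$ one gets $\bm v\in L^2(\Omega)^d$, $\div\bm v=\Delta\varphi=q\in L^2(\Omega)$, and $\bm v\cdot\bm n=\partial_{\bm n}\varphi=0$ on $\partial\Omega$, so $\bm v\in H_0(\div,\Omega)=\bm V$ and $\|\bm v\|_{\div}^2=\|\nabla\varphi\|^2+\|q\|^2\le(1+C_P^2)\|q\|^2$. Consequently
\begin{equation*}
\sup_{\bm z\in\bm V}\frac{(\div \bm z,q)}{\|\bm z\|_{\div}}\;\ge\;\frac{(\div \bm v,q)}{\|\bm v\|_{\div}}\;=\;\frac{\|q\|^2}{\|\bm v\|_{\div}}\;\ge\;\frac{\|q\|}{\sqrt{1+C_P^2}},
\end{equation*}
and dividing by $\|q\|$ and taking the infimum over $q\in P$ proves the claim with $\beta_d=(1+C_P^2)^{-1/2}$, which depends only on $\Omega$.

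An equivalent route, the one used in the references cited just before the lemma, is to invoke the well-known surjectivity of $\div:H^1_0(\Omega)^d\to L^2_0(\Omega)$ with a bounded right inverse and then the continuous embedding $H^1_0(\Omega)^d\hookrightarrow H_0(\div,\Omega)$ together with $\|\bm w\|_{\div}\le\|\bm w\|_1$. Either way, the only analytic input is a domain-dependent inequality that is completely independent of the Biot parameters, so there is no real obstacle; the one point deserving attention is the compatibility condition $\int_\Omega q\, d\bm x=0$ (and connectedness of $\Omega$), which is exactly what makes the auxiliary problem solvable. The lemma then serves as a standard building block to be combined with the parameter-weighted arguments of the following sections.
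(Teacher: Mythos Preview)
Your argument is correct and is exactly the standard construction one finds in the references the paper cites. Note, however, that the paper does \emph{not} actually prove this lemma: it is stated as a well-known result with a reference to \cite{Brezzi1974existence,Boffi2013mixed}, so there is no ``paper's own proof'' to compare against. Your Neumann-problem construction (and the alternative via the bounded right inverse of $\div:H^1_0(\Omega)^d\to L^2_0(\Omega)$ that you mention) is precisely the content of those references, so nothing is missing.
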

{
\begin{lemma}\label{Stokes:inf-sup}
There {exists} a positive constant $\beta_s>0$, such that 
\begin{equation}
\inf_{q\in P}\sup_{\bm u\in \bm U}\frac{(\div \bm u, q)}{\|\bm u\|_{1}\|q\|}\geq \beta_s.
\end{equation}
\end{lemma}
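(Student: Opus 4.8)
The statement is the classical Stokes inf-sup (LBB) condition; equivalently, it asserts that $\div$ maps $H^1_0(\Omega)^d$ onto $L^2_0(\Omega)$ with a bounded right inverse. The plan is therefore to recall the standard proof, which splits into an elementary reduction and one substantive ingredient (the existence of that right inverse).

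First I would record the reduction: \emph{if} there is a constant $C_\Omega>0$ such that every $q\in L^2_0(\Omega)$ admits some $\bm u_q\in H^1_0(\Omega)^d$ with $\div\bm u_q=q$ and $\|\bm u_q\|_1\le C_\Omega\|q\|$, then for $0\ne q\in P$, testing the supremum with $\bm u=\bm u_q$ and using $(\div\bm u_q,q)=\|q\|^2$ gives
\[
\sup_{\bm u\in\bm U}\frac{(\div\bm u,q)}{\|\bm u\|_1\,\|q\|}\;\ge\;\frac{(\div\bm u_q,q)}{\|\bm u_q\|_1\,\|q\|}\;=\;\frac{\|q\|}{\|\bm u_q\|_1}\;\ge\;\frac{1}{C_\Omega},
\]
so Lemma~\ref{Stokes:inf-sup} follows with $\beta_s=C_\Omega^{-1}$. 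One may phrase the same fact as the Necas-type inequality $\|q\|\lesssim\|\nabla q\|_{H^{-1}(\Omega)^d}$ on $L^2_0(\Omega)$, the right-hand side being exactly the displayed supremum.

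The substantive step is the construction of the right inverse (the de~Rham/Bogovskii lemma). When $\Omega$ is star-shaped with respect to an open ball $B$, I would fix $\theta\in C_c^\infty(B)$ with $\int\theta=1$, let $\bm u_q$ be the corresponding Bogovskii singular-integral operator applied to $q$, and invoke Calder\'on--Zygmund theory to obtain $\bm u_q\in H^1_0(\Omega)^d$ with $\|\nabla\bm u_q\|\lesssim\|q\|$ (hence $\|\bm u_q\|_1\lesssim\|q\|$ by Poincar\'e), the properties of $\theta$ guaranteeing $\div\bm u_q=q$ whenever $\int_\Omega q=0$. For a general bounded Lipschitz domain, one covers $\overline\Omega$ by finitely many subdomains each star-shaped with respect to a ball, picks a subordinate partition of unity, writes $q=\sum_i q_i$ with every $q_i$ of zero mean on its own patch (moving the mean defects across the overlaps, which is possible because $\Omega$ is connected), solves on each patch by the star-shaped case, and adds the results.

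The only real obstacle is this last construction: it is where the geometry of $\Omega$ enters and where the value of $\beta_s$ is actually fixed, and it degenerates for thin channels or outward cusps — so the hypothesis that $\Omega$ is a bounded Lipschitz domain (or a finite union of domains star-shaped with respect to balls) is genuinely needed. Everything else is routine duality. Since $\Omega$ is a fixed Lipschitz domain in our setting, I would simply cite this as the well-known result it is, e.g.\ from \cite{Brezzi1974existence,Boffi2013mixed}.
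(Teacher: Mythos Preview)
Your argument is correct and is the standard route to the Stokes inf--sup condition: reduce to the existence of a bounded right inverse of $\div$, then construct it via the Bogovskii operator on star-shaped pieces and patch. The paper, however, does not prove this lemma at all; it states it as a classical result and refers the reader to \cite{Brezzi1974existence,Boffi2013mixed}. So your proposal is not so much a different approach as a fleshed-out version of what the paper leaves to the literature --- and indeed your closing sentence already anticipates this, suggesting exactly the same citations.
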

}
Let us now turn to the stability of the formulation~\eqref{eq:8a}--\eqref{eq:8c}. {We consider two
different cases. In the first case, for some nearly incompressible materials,  we have that the Lam\'e
parameter $\lambda$ {tends} to infinity. {If} we assume that $\lambda \gg 1$, $R_p^{-1} \eqsim 1$ 
and $0\leq\alpha_p \lesssim 1$ {then, defining the norms according to}~\eqref{eq:norm4n}--\eqref{eq:norm6n},
the boundedness of both
$(\ep (\boldsymbol u), \ep (\boldsymbol w)) +\lambda ( \div \boldsymbol  u, \div \boldsymbol  w)+(R_p^{-1}\bm v, \bm v)$
and $(\div \boldsymbol u,q) +(\div \boldsymbol v,q)$ are obvious.
Further, {for} $0 \leq \alpha_p \lesssim 1$, we obtain the boundedness of $\alpha_p  (p,q)$.
{Moreover, defining the norms by~\eqref{eq:norm4n}--\eqref{eq:norm6n}, using Lemma~\ref{Hdiv:inf-sup},
and choosing $(\bm u, \bm v)=(\bm 0, \bm v)\in \bm U\times \bm V$,
we obtain} the inf-sup condition  
\begin{equation}
\inf_{q\in P}\sup_{(\bm u, \bm v)\in \bm U\times \bm V}\frac{(\div \bm u, q)+(\div \bm v, q)}{(\|\bm u\|_{\bm U}+\|\bm v\|_{\boldsymbol V})\|q\|_P}\geq \beta_{dv}>0.
\end{equation}
Finally,  the coercivity of
$(\ep (\boldsymbol u), \ep (\boldsymbol u)) +\lambda (\div \bm u, \div \bm u)+(R_p^{-1}\bm v, \bm v)$
on the kernel set 
\begin{equation}\label{kernel}
\bm Z=\{(\bm u, \bm v)\in \bm U\times\bm V: (\div \bm u, q)+(\div\bm  v, q)=0, \forall q\in P\}
\end{equation}
can also be verified. In fact, since $(\bm u, \bm v)\in \bm Z$ means $\div \bm v=- \div\bm u$, it follows that
\begin{equation}\label{Hdiv:coercivity}
\begin{split}
&(\ep (\boldsymbol u), \ep (\boldsymbol u)) +\lambda(\div \bm u, \div \bm u)+(R_p^{-1}\bm v, \bm v)\\
&\geq (\ep (\boldsymbol u), \ep (\boldsymbol u)) +\frac{\lambda}{2}(\div \bm u, \div \bm u)+(R_p^{-1}\bm v, \bm v)+\frac{\lambda}{2}(\div \bm u, \div \bm u)\\
&= (\ep (\boldsymbol u), \ep (\boldsymbol u)) +\frac{\lambda}{2}(\div \bm u, \div \bm u)+(R_p^{-1}\bm v, \bm v)+\frac{\lambda}{2}(\div \bm v, \div \bm v)  ~~~(\hbox{by}~~   \div \bm v=- \div\bm u) \\
& \gtrsim  \frac{1}{2} \Big((\ep (\boldsymbol u), \ep (\boldsymbol u)) +\lambda(\div \bm u, \div \bm u)+(R_p^{-1}\bm v, \bm v)+(R_p^{-1}\div\bm v, \div\bm v)\Big),
\end{split}
\end{equation}}
where the last inequality comes from the assumption $\lambda \gg 1$, $R_p^{-1} \eqsim 1$.

{On the other hand, in many practical applications one has
$\lambda \eqsim 1$, $c_{up}\eqsim1$},
$c_{pp}=c_{up}/{\lambda}$,
and $K ~\hbox {or}~\tau$ tending to zero. 
{Then, since in this case} we have
%$\lambda=1, 0\leq\alpha_p\leq \frac{1}{\lambda}$
$R_p^{-1} \gg 1$
and $0 \leq \alpha_p \lesssim 1$,
%{in this} case,  assuming that $\lambda \eqsim 1$,
%$R_p^{-1} \gg 1$ and $0 \leq \alpha_p \lesssim 1$,
{defining the norms according to~\eqref{eq:norm4n}--\eqref{eq:norm6n}}, the boundedness of both
$(\ep (\boldsymbol u), \ep (\boldsymbol w)) +\lambda ( \div \boldsymbol  u, \div \boldsymbol  w)+(R_p^{-1}\bm v, \bm v)$
and $(\div \boldsymbol u,q) +(\div \boldsymbol v,q)$ are again obvious.
Further, {the assumption $0 \leq \alpha_p \lesssim 1$ implies the boundedness of $\alpha_p  (p,q)$.}

Next, {using} the definition of the norms~\eqref{eq:norm4n}--\eqref{eq:norm6n}, Lemma \ref{Stokes:inf-sup},
{and} choosing $(\bm u, \bm v)=(\bm u, \bm 0)\in \bm U\times \bm V$, we {obtain} the inf-sup condition  
\begin{equation}
\inf_{q\in P}\sup_{(\bm u, \bm v)\in \bm U\times \bm V}\frac{(\div \bm u, q)+(\div \bm v, q)}{(\|\bm u\|_{\bm U}+\|\bm v\|_{\boldsymbol V})\|q\|_P}\geq\beta_{su}>0.
\end{equation}
%But the Stokes inf-sup condition is not satisfied. 
{{We see, however, that in this case} the coercivity of
$(\ep (\boldsymbol u), \ep (\boldsymbol u)) +\lambda (\div \bm u, \div \bm u)+(R_p^{-1}\bm v, \bm v)$
can not be valid any more on the kernel set $\bm Z$, where $\bm Z$ is defined by \eqref{kernel}.}
%$$
%\bm Z=\{(\bm u, \bm v)\in \bm U\times\bm V: (\div \bm u, q)+(\div\bm  v, q)=0, \forall q\in P\}.
%$$
In fact, since $(\bm u, \bm v)\in \bm Z$ means $\div \bm v=- \div\bm u$, {it follows that for any $M>0$,} %independent of $R_p^{-1}$, 
there exits $(\bm u,\bm v)$, {where, e.g., $\div \bm v \neq 0$, and $R_p^{-1}$ is large enough,} such that
\begin{equation}\label{non:coercivity}
\begin{split}
&(\ep (\boldsymbol u), \ep (\boldsymbol u)) +(\div \bm u, \div \bm u)+(R_p^{-1}\bm v, \bm v)+R_p^{-1}(\div \bm v, \div\bm v )\\
&\geq M \Big((\ep (\boldsymbol u), \ep (\boldsymbol u)) +(\div \bm u, \div \bm u)+(R_p^{-1}\bm v, \bm v)\Big)\\
& {\gtrsim} \, M \Big((\ep (\boldsymbol u), \ep (\boldsymbol u)) +\lambda(\div \bm u, \div \bm u)+(R_p^{-1}\bm v, \bm v)\Big),
\end{split}
\end{equation}
where the second inequality comes from {$\lambda \lesssim 1$}.

{Using the norms defined in~\eqref{eq:norm4n}--\eqref{eq:norm5n}},
{the estimate}~\eqref{non:coercivity} implies that for
any $M>0$, there exists $(\bm u,\bm v)\in \bm Z$ (and $R_p^{-1}$  large enough) such that
\begin{equation}
\|\bm u\|^2_{\bm U}+\|\bm v\|^2_{\bm V}\geq M \Big((\ep (\boldsymbol u), \ep (\boldsymbol u)) +(\div \bm u, \div \bm u)+(R_p^{-1}\bm v, \bm v\Big).
\end{equation}
Therefore the system  \eqref{eq:8a}--\eqref{eq:8c} is not {uniformly} stable with respect to the parameter $R_p^{-1}$
under the norms \eqref{eq:norm4n}--\eqref{eq:norm6n}. 
%In this way, we showed that $P_1/RT_0/P_0$ are not stable for $R_p^{-1}$.

{From this observation we conclude that we have to define proper norms (as we do below in~\eqref{eq:7a}--\eqref{eq:7c})
in order to establish the coercivity of
$(\ep (\boldsymbol u), \ep (\boldsymbol u))+\lambda (\div \bm u, \div \bm u)+(R_p^{-1}\bm v, \bm v)$ on $\bm Z$ {in} both
above cases.}
%%Hence we arrive in the following Section \ref{sec:par_rob_stab_model}.

%But as we have point out in remark \ref{Stokes_necessary}, the $H(\div)$ inf-sup condition is not satisfied any more. Hence in order to get
%the big inf-sup condition, we need to choose another space such that the Stokes inf-sup condition is satisfied. In our paper, we used $H(\div)$ conforming spaces such as $BDM, RT$ and $BDFM$ to replace $P_1$.  

\section{Parameter-robust stability of the model}\label{sec:par_rob_stab_model}

In this section, we first define proper parameter-dependent norms for the spaces $\bm U, \bm V$ and $P$
based on which we establish then the parameter-robust stability of the Biot's model~\eqref{eq:8a}--\eqref{eq:8c}
for parameters in the ranges
\begin{equation}\label{parameter:range}
%1\leq\lambda\leq \infty, 0\leq R^{-1}_p\leq \infty, 0\leq \alpha_p \leq \infty.
{\lambda \ge 1, \quad  R^{-1}_p \ge 0, \quad \alpha_p \ge 0.}
\end{equation}
%%{We should note that range of the parameters are very general and reasonable.}

{Let us denote}
\begin{equation}\label{eq:6}
\rho=\min\{\lambda, R^{-1}_p\},~~ \gamma=\max\{\rho^{-1}, \alpha_p\},
\end{equation}
and consider the Hilbert spaces $\bU=H^1_0(\Omega)^d, \bV=H_0(\div, \Omega), P= L^2_0(\Omega)$
with parameter-dependent norms $\Vert \cdot \Vert_{\bU}$, $\Vert \cdot \Vert_{\bV}$,
$\Vert \cdot \Vert_P$ induced by the \emph{inner products}
\begin{subequations}\label{eq:7}
\begin{eqnarray}
(\boldsymbol u, \boldsymbol w)_{\bU}&=&(\ep (\boldsymbol u), \ep (\boldsymbol w))+\lambda(\div \boldsymbol u, \div \boldsymbol w),\label{eq:7a}\\
(\boldsymbol v, \boldsymbol z)_{\bV}&=&(R^{-1}_p \boldsymbol v,\boldsymbol z)+\gamma^{-1} (\div \boldsymbol v, \div \boldsymbol z),\label{eq:7b}\\
(p, q)_P&=&\gamma (p,q).\label{eq:7c}
\end{eqnarray}
\end{subequations}
%These parameter dependent norms 
The above norms are the key to establish the parameter-robust stability of the model.
%

%For simplicity, we still write the weak formulation of  \eqref{eq:5a}-\eqref{eq:5c} which repeats  \eqref{eq:8a}--\eqref{eq:8c}  as: Find $(\boldsymbol u, \boldsymbol v, p)\in \boldsymbol U\times\boldsymbol V\times P$, such that for any
%$(\boldsymbol w, \boldsymbol z, q)\in \boldsymbol U\times\boldsymbol V\times P$
%\begin{subequations}\label{eq:8}
%\begin{eqnarray}
%(\ep (\boldsymbol u), \ep (\boldsymbol w)) +\lambda ( \div \boldsymbol  u, \div \boldsymbol  w)- ( p, \div \boldsymbol  w)&=&(\boldsymbol f, \boldsymbol w),\label{eq:8a}\\
%(R_p^{-1}\boldsymbol  v, \boldsymbol  z)- (p, \div \boldsymbol  z)&=&\boldsymbol 0, \label{eq:8b} \\
%-(\div \boldsymbol u,q)  -(\div \boldsymbol v,q)  -\alpha_p  (p,q)&=&(g,q). \label{eq:8c}
%\end{eqnarray}
%\end{subequations}

Directly related to problem~\eqref{eq:8a}--\eqref{eq:8c} we introduce the bilinear form 
\begin{equation}\label{eq:42}
\begin{split}
A((\boldsymbol u,  \boldsymbol v, p), (\boldsymbol w,  \boldsymbol z, q))=&(\ep (\boldsymbol u), \ep (\boldsymbol w)) +\lambda ( \div \boldsymbol  u, \div \boldsymbol  w)- ( p, \div \boldsymbol  w)+
  (R_p^{-1}\boldsymbol  v, \boldsymbol  z)- (p, \div \boldsymbol  z)\\
-&(\div \boldsymbol u,q)  -(\div \boldsymbol v,q)  -\alpha_p  (p,q).
\end{split}
\end{equation}
{{In view of} the definition of the norms \eqref{eq:7a}--\eqref{eq:7c},  the boundedness of
$A((\boldsymbol u,  \boldsymbol v, p), (\boldsymbol w,  \boldsymbol z, q))$ is obvious.
{We come to our first main result.}}
\begin{theorem}\label{Continuity:Stability}
There exists a constant $\beta>0$ independent of  the parameters $\lambda,~R_p^{-1},~\alpha_p$, such that 
\begin{equation}\label{eq:43}
\inf_{(\boldsymbol u,  \boldsymbol v, p)\in \boldsymbol U\times\boldsymbol V\times P} \sup_{(\boldsymbol w,\boldsymbol z, q)\in \boldsymbol U\times\boldsymbol V\times P}\frac{A((\boldsymbol u,  \boldsymbol v, p), (\boldsymbol w,  \boldsymbol z, q))}{(\|\boldsymbol u\|_{\boldsymbol U}+\|\boldsymbol v\|_{\boldsymbol V}+\|p\|_P)(\|\boldsymbol w\|_{\boldsymbol U}+\|\boldsymbol z\|_{\boldsymbol V}+\|q\|_P)}\geq \beta.
\end{equation}
\end{theorem}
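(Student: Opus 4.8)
The plan is to establish the inf-sup condition \eqref{eq:43} by the standard strategy for three-field (double saddle-point) problems: given $(\boldsymbol u, \boldsymbol v, p)$ with unit combined norm, I would construct an explicit test triple $(\boldsymbol w, \boldsymbol z, q)$ for which $A((\boldsymbol u, \boldsymbol v, p), (\boldsymbol w, \boldsymbol z, q)) \gtrsim \|\boldsymbol u\|_{\boldsymbol U}^2 + \|\boldsymbol v\|_{\boldsymbol V}^2 + \|p\|_P^2$ while $\|\boldsymbol w\|_{\boldsymbol U} + \|\boldsymbol z\|_{\boldsymbol V} + \|q\|_P \lesssim \|\boldsymbol u\|_{\boldsymbol U} + \|\boldsymbol v\|_{\boldsymbol V} + \|p\|_P$. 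The natural first attempt is the ``diagonal'' choice $(\boldsymbol w, \boldsymbol z, q) = (\boldsymbol u, \boldsymbol v, -p)$, which makes the off-diagonal pressure-coupling terms cancel in pairs and yields exactly $(\ep(\boldsymbol u), \ep(\boldsymbol u)) + \lambda(\div \boldsymbol u, \div \boldsymbol u) + (R_p^{-1}\boldsymbol v, \boldsymbol v) + \alpha_p(p,p)$. This controls $\|\boldsymbol u\|_{\boldsymbol U}^2$, the zeroth-order part of $\|\boldsymbol v\|_{\boldsymbol V}^2$, and (only) the $\alpha_p$-part of $\|p\|_P^2$; it misses the $\gamma^{-1}\|\div \boldsymbol v\|^2$ piece of the $\boldsymbol V$-norm and, when $\gamma = \rho^{-1}$, the full $\gamma\|p\|^2$ of the $P$-norm. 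The remaining steps repair these two deficiencies.

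To recover control of $\|p\|_P = \gamma^{1/2}\|p\|$, I would use the combined Stokes/Darcy inf-sup property: by Lemmas~\ref{Hdiv:inf-sup} and~\ref{Stokes:inf-sup}, given $p$ there is $\boldsymbol w_p \in \boldsymbol U$ (or $\boldsymbol z_p \in \boldsymbol V$) with $\div \boldsymbol w_p = p$ (resp. $\div \boldsymbol z_p = p$) and $\|\boldsymbol w_p\|_1 \lesssim \|p\|$, $\|\boldsymbol z_p\|_{\div} \lesssim \|p\|$. Testing equation~\eqref{eq:8a} against $-\boldsymbol w_p$ (and/or \eqref{eq:8b} against $-\boldsymbol z_p$), scaled by a suitable power of $\gamma$, produces a $+\gamma\|p\|^2$ term at the cost of cross terms $\gamma\,(\ep(\boldsymbol u),\ep(\boldsymbol w_p))$, $\lambda\gamma(\div\boldsymbol u, p)$, $\gamma (R_p^{-1}\boldsymbol v, \boldsymbol z_p)$, which are absorbed by Young's inequality into the already-controlled quantities; the key point is that the weights are chosen so that $\lambda\gamma \lesssim \gamma \cdot \rho \cdot \rho^{-1}\lambda \lesssim$ (bounded), using $\gamma \le \rho^{-1}$ is false in general, so one must instead split cases on whether $\gamma = \rho^{-1}$ or $\gamma = \alpha_p$ and exploit $\rho = \min\{\lambda, R_p^{-1}\} \le \lambda$ to keep $\lambda/\rho \cdot$ (the scaling) bounded. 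For the missing $\gamma^{-1}\|\div \boldsymbol v\|^2$: from \eqref{eq:8c} we have $\div \boldsymbol v = -\div \boldsymbol u - \alpha_p p - g$, but at the level of the bilinear form I would instead test \eqref{eq:8c} against $q = \gamma^{-1}\div\boldsymbol v$ (legitimate since $\div \boldsymbol v \in L^2_0$), giving $-\gamma^{-1}\|\div\boldsymbol v\|^2 - \gamma^{-1}(\div\boldsymbol u, \div\boldsymbol v) - \alpha_p\gamma^{-1}(p, \div\boldsymbol v)$, and move it to the left; the cross term with $\div\boldsymbol u$ is handled via $\gamma^{-1} \le \rho \le \lambda$, so $\gamma^{-1}(\div\boldsymbol u,\div\boldsymbol v)$ is dominated by $\lambda\|\div\boldsymbol u\|^2 + \gamma^{-1}\|\div\boldsymbol v\|^2$, and the term with $p$ uses $\alpha_p\gamma^{-1} \le \alpha_p \cdot \alpha_p^{-1}\cdot\!$(when $\gamma = \alpha_p$) or $\alpha_p\gamma^{-1} = \alpha_p\rho \le 1$ appropriately, absorbing into $\gamma\|p\|^2$ and $\gamma^{-1}\|\div\boldsymbol v\|^2$.

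Combining the three test functions with small positive weights $\delta_1, \delta_2$ multiplying the two correction pieces, $(\boldsymbol w, \boldsymbol z, q) = (\boldsymbol u - \delta_1\boldsymbol w_p,\ \boldsymbol v - \delta_1\boldsymbol z_p,\ -p + \delta_2\gamma^{-1}\div\boldsymbol v)$, and choosing $\delta_1, \delta_2$ small enough that all cross terms are absorbed, gives $A \gtrsim \|\boldsymbol u\|_{\boldsymbol U}^2 + \|\boldsymbol v\|_{\boldsymbol V}^2 + \|p\|_P^2$ with constants depending only on $\beta_d, \beta_s$ and the absolute constants from Young's inequality — hence independent of $\lambda, R_p^{-1}, \alpha_p$. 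The boundedness $\|\boldsymbol w\|_{\boldsymbol U} + \|\boldsymbol z\|_{\boldsymbol V} + \|q\|_P \lesssim (\|\boldsymbol u\|_{\boldsymbol U} + \|\boldsymbol v\|_{\boldsymbol V} + \|p\|_P)$ follows from the norm bounds on $\boldsymbol w_p, \boldsymbol z_p$ together with $\gamma\cdot\gamma^{-2}\|\div\boldsymbol v\|^2 = \gamma^{-1}\|\div\boldsymbol v\|^2 \le \|\boldsymbol v\|_{\boldsymbol V}^2$. The main obstacle I anticipate is the careful bookkeeping of the parameter weights in the correction terms: one must verify in all parameter regimes (the two limiting cases discussed in Section~\ref{sec:com_unstable_disc} plus everything in between) that the coefficients multiplying the cross terms — built from ratios like $\lambda\gamma$, $\alpha_p\gamma^{-1}$, $\gamma^{-1}/\lambda$, $\gamma^{-1} R_p$ — are uniformly bounded, which is exactly what the definitions $\rho = \min\{\lambda, R_p^{-1}\}$, $\gamma = \max\{\rho^{-1},\alpha_p\}$ are engineered to ensure, but making this airtight requires splitting into the cases $\{\lambda \le R_p^{-1}$ vs. $>\}$ and $\{\rho^{-1} \ge \alpha_p$ vs. $<\}$ and checking each.
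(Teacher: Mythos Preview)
Your proposal is correct and follows essentially the same approach as the paper: diagonal test plus an inf-sup lift for the pressure plus the $q$-correction $-\gamma^{-1}\div\boldsymbol v$, with a case split on $\rho=\min\{\lambda,R_p^{-1}\}$. The paper's proof carries this out exactly, using in each case only \emph{one} of the two lifts---the Stokes lift $\boldsymbol u_0$ scaled by $\lambda^{-1/2}$ when $\rho=\lambda$, and the Darcy lift $\boldsymbol v_0$ scaled by $R_p^{1/2}$ when $\rho=R_p^{-1}$---so that the correction is bounded in the parameter-weighted norm (using both simultaneously, as in your combined test, would fail boundedness in the ``wrong'' case); no further sub-split on whether $\gamma=\rho^{-1}$ or $\gamma=\alpha_p$ is needed, since the diagonal test already supplies the $\alpha_p\|p\|^2$ part and the lift supplies $\rho^{-1}\|p\|^2$, together yielding $\gamma\|p\|^2$.
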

\begin{proof}
\underline{Case~\uppercase\expandafter{\romannumeral 1}:} 
\begin{equation}\label{eq:44}
\rho=\min\{\lambda,R^ {-1}_p\}=\lambda, ~\hbox{hence} ~\lambda\leq R_p^{-1}, ~\gamma^{-1}\leq \rho=\lambda.
\end{equation}
For any $(\boldsymbol u,  \boldsymbol v, p)\in \boldsymbol U\times\boldsymbol V\times P$, by Lemma~\ref{Stokes:inf-sup},
there exists 
\begin{equation}\label{eq:45}
\boldsymbol u_0\in \boldsymbol U,~\hbox{such that}~\div \boldsymbol u_0=\frac{1}{\sqrt \lambda} p,~ \|\boldsymbol u_0\|_1\leq \beta_s^{-1}\frac{1}{\sqrt \lambda} \|p\|.
\end{equation}
Choose 
\begin{equation}\label{eq:wzq}
\boldsymbol w=\delta \boldsymbol u-\frac{1}{\sqrt \lambda} \boldsymbol u_0,\;\;\; \boldsymbol z=\delta \boldsymbol v, \;\;\;q= -\delta p-\gamma^{-1}\div \boldsymbol v,
\end{equation}
where $\delta$ is a positive constant that will be determined later.

First we verify the boundedness of $(\boldsymbol w,  \boldsymbol z, q)$ by $(\boldsymbol u,  \boldsymbol v, p)$.

By \eqref{eq:45}, and noting that $\gamma^{-1}\leq \rho=\lambda$, we have 
\begin{equation}\label{eq:46}
\begin{split}
(\frac{1}{\sqrt \lambda} \boldsymbol u_0, \frac{1}{\sqrt \lambda} \boldsymbol u_0)_{\boldsymbol U}&=(\ep (\frac{1}{\sqrt \lambda}\boldsymbol u_0), \ep (\frac{1}{\sqrt \lambda}\boldsymbol u_0))+\lambda (\div \frac{1}{\sqrt \lambda} \boldsymbol u_0, \div \frac{1}{\sqrt \lambda} \boldsymbol u_0)\\
&=(\frac{1}{\lambda} \ep(\boldsymbol u_0),  \ep(\boldsymbol u_0))+(\div \boldsymbol u_0, \div \boldsymbol u_0)\leq \frac{1}{\lambda} \beta^{-2}_s \frac{1}{\lambda}\|p\|^2+
\frac{1}{\lambda}(p,p)\\
&\leq (\frac{1}{\lambda} \beta^{-2}_s+1)\frac{1}{\lambda} \|p\|^2\leq (\frac{1}{\lambda} \beta^{-2}_s+1) \gamma \|p\|^2=(\frac{1}{\lambda} \beta^{-2}_s+1) (p,p)_P.
\end{split}
\end{equation}
Next, since $1\leq \lambda$, we get the boundedness of $\boldsymbol w$, i.e.,
\begin{equation}\label{eq:47}
\|\boldsymbol w\|_{\boldsymbol U}\leq \delta \|\boldsymbol u\|_{\boldsymbol U}+\sqrt{\lambda ^{-1}\beta^{-2}_s+1} \|p\|_P\leq  \delta \|\boldsymbol u\|_{\boldsymbol U}+\sqrt{\beta^{-2}_s+1} \|p\|_P.
\end{equation}
It is obvious that $\|\boldsymbol z\|_{\boldsymbol V}= \delta\|\boldsymbol v\|_{\boldsymbol V}$.
We still need to bound $q$.
{Using \eqref{eq:wzq} and}
\begin{equation}\label{eq:48}
(\gamma^{-1} \div \boldsymbol v, \gamma^{-1} \div \boldsymbol v)_P=\gamma (\gamma^{-1} \div \boldsymbol v, \gamma^{-1} \div \boldsymbol v)=\gamma^{-1} (\div \boldsymbol v, \div \boldsymbol v)\leq (\boldsymbol v, \boldsymbol  v)_{\boldsymbol V}
\end{equation}
we obtain
\begin{equation}\label{eq:49}
\|q\|_P\leq \delta \|p\|_P+\|\boldsymbol  v\|_{\boldsymbol V}.
\end{equation}
{Collecting the estimates above we get
\begin{equation}\label{eq:boundedness}
\|\boldsymbol w\|_{\boldsymbol U}+\|\boldsymbol z\|_{\boldsymbol V}+\|q\|_P \leq
%\lesssim \frac{\delta}{\beta_s \sqrt \lambda} \|(\boldsymbol u,  \boldsymbol v, p)\|_{\boldsymbol W} .
\left( \delta + \sqrt{\beta_s^{-2}+1}\right) 
\left(
\|\boldsymbol u\|_{\boldsymbol U}+\|\boldsymbol v\|_{\boldsymbol V}+\|p\|_P \right).
%=: C_0 \left(
%\|\boldsymbol u\|_{\boldsymbol U}+\|\boldsymbol v\|_{\boldsymbol V}+\|p\|_P \right).
\end{equation}}

Next we show the coercivity of $A((\boldsymbol u,  \boldsymbol v, p), (\boldsymbol w,  \boldsymbol z, q))$.
Using the definition of $(\boldsymbol w,  \boldsymbol z, q)$ and \eqref{eq:45}, we find
\begin{eqnarray*}
A((\boldsymbol u,  \boldsymbol v, p), (\boldsymbol w,  \boldsymbol z, q))&=&(\ep (\boldsymbol u), \ep (\boldsymbol w)) +\lambda ( \div \boldsymbol  u, \div \boldsymbol  w)- ( p, \div \boldsymbol  w)+
  (R_p^{-1}\boldsymbol  v, \boldsymbol  z)- (p, \div \boldsymbol  z)\\
&&-(\div \boldsymbol u,q)  -(\div \boldsymbol v,q)  -\alpha_p  (p,q)\\
&=&(\ep (\boldsymbol u), \delta \ep (\boldsymbol u)-\frac{1}{\sqrt \lambda}\ep (\boldsymbol u_0)) +\lambda ( \div \boldsymbol  u, \delta\div \boldsymbol  u-\frac{1}{\sqrt \lambda}\div \boldsymbol u_0 )- ( p, \delta\div \boldsymbol  u-\frac{1}{\sqrt \lambda}\div \boldsymbol u_0)\\
&&+(R_p^{-1}\boldsymbol  v, \delta\boldsymbol  v)- (p,  \delta\div \boldsymbol  v)-(\div \boldsymbol u, -\delta p-\gamma^{-1}\div \boldsymbol v)\\
&&-(\div \boldsymbol v,-\delta p-\gamma^{-1}\div \boldsymbol v)  -\alpha_p  (p,-\delta p-\gamma^{-1}\div \boldsymbol v)\\
&=&\delta(\ep (\boldsymbol u), \ep (\boldsymbol u))-\frac{1}{\sqrt \lambda}(\ep (\boldsymbol u), \ep (\boldsymbol u_0)) +\delta \lambda ( \div \boldsymbol  u, \div \boldsymbol  u)-{\sqrt \lambda}(\div \boldsymbol  u,\div \boldsymbol u_0 )\\
&&- \delta(p,\div \boldsymbol  u)+\frac{1}{\sqrt \lambda}(p,\div \boldsymbol u_0)
+\delta(R_p^{-1}\boldsymbol  v, \boldsymbol  v)- \delta(p,  \div \boldsymbol  v)+\delta(\div \boldsymbol u, p)\\
&&+\gamma^{-1}(\div \boldsymbol u, \div \boldsymbol v)+\delta(\div \boldsymbol v, p)+\gamma^{-1}(\div \boldsymbol v,\div \boldsymbol v)  +\delta \alpha_p  (p, p) + \alpha_p(p, \gamma^{-1}\div \boldsymbol v)\\
&=&\delta(\ep (\boldsymbol u), \ep (\boldsymbol u))-\frac{1}{\sqrt \lambda}(\ep (\boldsymbol u), \ep (\boldsymbol u_0)) +\delta \lambda ( \div \boldsymbol  u, \div \boldsymbol  u)-(\div \boldsymbol  u, p)+\frac{1}{\lambda}(p, p)~~~(\hbox{by}~\eqref{eq:45})\\
&&+\delta(R_p^{-1}\boldsymbol  v, \boldsymbol  v)+\gamma^{-1}(\div \boldsymbol u, \div \boldsymbol v)+\gamma^{-1}(\div \boldsymbol v,\div \boldsymbol v)  +\delta \alpha_p  (p, p) + \alpha_p(p, \gamma^{-1}\div \boldsymbol v) .
\end{eqnarray*}
Applying Cauchy's inequality and using~\eqref{eq:45}, we therefore obtain
\begin{eqnarray*}
A((\boldsymbol u,  \boldsymbol v, p), (\boldsymbol w,  \boldsymbol z, q))
&=&\delta(\ep (\boldsymbol u), \ep (\boldsymbol u))-\frac{1}{\sqrt \lambda}(\ep (\boldsymbol u), \ep (\boldsymbol u_0)) +\delta \lambda ( \div \boldsymbol  u, \div \boldsymbol  u)-(\div \boldsymbol  u, p)+\frac{1}{\lambda}(p, p)\\
&&+\delta(R_p^{-1}\boldsymbol  v, \boldsymbol  v)+\gamma^{-1}(\div \boldsymbol u, \div \boldsymbol v)+\gamma^{-1}(\div \boldsymbol v,\div \boldsymbol v)  +\delta \alpha_p  (p, p) + \alpha_p(p, \gamma^{-1}\div \boldsymbol v)\\
&\geq&\delta(\ep (\boldsymbol u), \ep (\boldsymbol u))-\frac{1}{2}\frac{1}{\sqrt \lambda}\epsilon_1^{-1}(\ep (\boldsymbol u), \ep (\boldsymbol u))-\frac{1}{2}\frac{1}{\sqrt \lambda}\epsilon_1(\ep (\boldsymbol u_0), \ep (\boldsymbol u_0)) +\delta \lambda ( \div \boldsymbol  u, \div \boldsymbol  u)\\
&&-\frac{1}{2}\epsilon_2^{-1}\lambda(\div \boldsymbol  u, \div \boldsymbol  u)-\frac{1}{2}\epsilon_2\frac{1}{\lambda}(p, p)+\frac{1}{\lambda}(p, p)+\delta(R_p^{-1}\boldsymbol  v, \boldsymbol  v)-\frac{1}{2}\epsilon_3^{-1}\gamma^{-1}(\div \boldsymbol u, \div \boldsymbol u)\\
&&-\frac{1}{2}\epsilon_3\gamma^{-1}(\div \boldsymbol v, \div \boldsymbol v)+\gamma^{-1}(\div \boldsymbol v,\div \boldsymbol v)  +\delta \alpha_p  (p, p) -\frac{1}{2}\epsilon_4\gamma^{-1}(\div \boldsymbol v, \div \boldsymbol v)\\
&& -\frac{1}{2}\epsilon_4^{-1}\alpha_p^2\gamma^{-1}(p, p)\\
&\geq&(\delta-\frac{1}{2}\frac{1}{\sqrt \lambda}\epsilon_1^{-1})(\ep (\boldsymbol u), \ep (\boldsymbol u)) +(\delta -\frac{1}{2}\epsilon_2^{-1})\lambda ( \div \boldsymbol  u, \div \boldsymbol  u)-\frac{1}{2}\epsilon_3^{-1}\gamma^{-1}(\div \boldsymbol u, \div \boldsymbol u)\\
&&+\delta(R_p^{-1}\boldsymbol  v, \boldsymbol  v)+(1-\frac{1}{2}\epsilon_3-\frac{1}{2}\epsilon_4)\gamma^{-1}(\div \boldsymbol v,\div \boldsymbol v) \\
&&+(1-\frac{1}{2}\frac{1}{\sqrt \lambda}\epsilon_1\beta_s^{-2}-\frac{1}{2}\epsilon_2)\frac{1}{\lambda}(p, p)+(\delta -\frac{1}{2}\epsilon_4^{-1}\alpha_p\gamma^{-1})\alpha_p  (p, p)  ~~~~~(\hbox{by}~\eqref{eq:45}) .
\end{eqnarray*}
%
%Letting} $\epsilon_1=\displaystyle\frac{\beta_s^2}{4}, \epsilon_2=\epsilon_3=\epsilon_4=\displaystyle\frac{1}{4}$ and noting that $\rho^{-1}\leq \gamma, \lambda\geq \gamma^{-1}>0$, we further conclude that
%
%\begin{eqnarray*}
%A((\boldsymbol u,  \boldsymbol v, p), (\boldsymbol w,  \boldsymbol z, q))
%&\geq&(\delta-\frac{1}{\sqrt \lambda}4 \beta_s^{-2})(\ep (\boldsymbol u), \ep (\boldsymbol u)) +(\delta -8)\lambda ( \div \boldsymbol  u, \div \boldsymbol  u)+\delta(R_p^{-1}\boldsymbol  v, \boldsymbol  v)\\
%&&+\frac{1}{2}\gamma^{-1}(\div \boldsymbol v,\div \boldsymbol v)+\frac{1}{2}\frac{1}{\lambda}(p, p)+(\delta -4\alpha_p\gamma^{-1})\alpha_p  (p, p)
%\end{eqnarray*}
%Now, let $\delta=\max\{4 \beta_s^{-2}+\displaystyle\frac{1}{2}, 8+\displaystyle\frac{1}{2}\}$. Then by noting that $\alpha_p\leq \gamma$, we arrive at the following coervicity estimate
%\begin{eqnarray}\label{eq:60}
%A((\boldsymbol u,  \boldsymbol v, p), (\boldsymbol w,  \boldsymbol z, q))
%&\geq&\frac{1}{2}(\ep (\boldsymbol u), \ep (\boldsymbol u)) +\frac{1}{2}\lambda ( \div \boldsymbol  u, \div \boldsymbol  u)+\frac{1}{2}(R_p^{-1}\boldsymbol  v, \boldsymbol  v) \nonumber \\
%&&+\frac{1}{2}\gamma^{-1}(\div \boldsymbol v,\div \boldsymbol v)+\frac{1}{2}\frac{1}{\lambda}(p, p)+\frac{1}{2}\alpha_p  (p, p) \\
%&\geq&\frac{1}{2}\big(\|\boldsymbol u\|^2_{\boldsymbol U}+\|\boldsymbol  v\|^2_{\boldsymbol V}+\|p\|^2_P\big). \nonumber
%\end{eqnarray}

{Now, letting $\epsilon_1=\frac{\beta_s^2}{2}, \epsilon_2=\epsilon_3=\epsilon_4=\frac{1}{2}$ and noting that
$\rho^{-1}\leq \gamma, \lambda\geq \gamma^{-1}>0$ and $\lambda\geq 1$, we further conclude that
\begin{eqnarray*}
A((\boldsymbol u,  \boldsymbol v, p), (\boldsymbol w,  \boldsymbol z, q))
&\geq&(\delta-\frac{1}{\sqrt \lambda} \beta_s^{-2})(\ep (\boldsymbol u), \ep (\boldsymbol u)) +(\delta -2)\lambda ( \div \boldsymbol  u, \div \boldsymbol  u)+\delta(R_p^{-1}\boldsymbol  v, \boldsymbol  v)\\
&&+\frac{1}{2}\gamma^{-1}(\div \boldsymbol v,\div \boldsymbol v)+\frac{1}{2}\frac{1}{\lambda}(p, p)+(\delta -\alpha_p\gamma^{-1})\alpha_p  (p, p) .
\end{eqnarray*}
Next, {letting} $\delta:=\max\{\beta_s^{-2}+\frac{1}{2}, 2+\frac{1}{2}\}$ {and noting} that $\alpha_p\leq \gamma, \lambda\geq 1$, we arrive at the following coervicity estimate
\begin{eqnarray}\label{eq:60}
A((\boldsymbol u,  \boldsymbol v, p), (\boldsymbol w,  \boldsymbol z, q))
&\geq&\frac{1}{2}(\ep (\boldsymbol u), \ep (\boldsymbol u)) +\frac{1}{2}\lambda ( \div \boldsymbol  u, \div \boldsymbol  u)+\frac{1}{2}(R_p^{-1}\boldsymbol  v, \boldsymbol  v) \nonumber \\
&&+\frac{1}{2}\gamma^{-1}(\div \boldsymbol v,\div \boldsymbol v)+\frac{1}{2}\frac{1}{\lambda}(p, p)+\alpha_p  (p, p) \\
& \ge& \frac{1}{2} \big(\|\boldsymbol u\|_{\boldsymbol U}+\|\boldsymbol  v\|_{\boldsymbol V}+\|p\|_P\big)^2. \nonumber
%\\
%&\geq&\big(\|\boldsymbol u\|^2_{\boldsymbol U}+\|\boldsymbol  v\|^2_{\boldsymbol V}+\|p\|^2_P\big). \nonumber
\end{eqnarray}}
%which holds for all $\epsilon>0$ and therefore
%$$
%A((\boldsymbol u,  \boldsymbol v, p), (\boldsymbol w,  \boldsymbol z, q))
%%&\geq&(\ep (\boldsymbol u), \ep (\boldsymbol u)) +\lambda ( \div \boldsymbol  u, \div \boldsymbol  u)+(R_p^{-1}\boldsymbol  v, \boldsymbol  v) \nonumber \\
%%&&+(1-2\epsilon)\gamma^{-1}(\div \boldsymbol v,\div \boldsymbol v)+(1-\epsilon)\frac{1}{\lambda}(p, p)+\alpha_p  (p, p) \\
%\geq \big(\|\boldsymbol u\|^2_{\boldsymbol U}+\|\boldsymbol  v\|^2_{\boldsymbol V}+\|p\|^2_P\big)
%\ge \frac{1}{3}\big(\|\boldsymbol u\|_{\boldsymbol U}+\|\boldsymbol  v\|_{\boldsymbol V}+\|p\|_P\big)^2.
%$$
%{
%Since we can choose $\epsilon > 0$ such that the estimates~\eqref{eq:boundedness} and~\eqref{eq:60} both hold
%for some positive constants $C_0$ and $C_1$ independent of the parameters $\lambda,~R_p^{-1},~\alpha_p$ this
%proves the statement of the theorem in Case~\uppercase\expandafter{\romannumeral 1}.}
%\smallskip

\underline{Case~\uppercase\expandafter{\romannumeral 2}:}
\begin{equation}\label{eq:61}
\rho=\min\{\lambda,R^ {-1}_p\}=R^ {-1}_p, ~\hbox{hence} ~\lambda\geq R_p^{-1}, ~\gamma^{-1}\leq \rho=R^ {-1}_p.
\end{equation}
For any $(\boldsymbol u,  \boldsymbol v, p)\in \boldsymbol U\times\boldsymbol V\times P$, by Lemma \ref{Hdiv:inf-sup}, there exists 
\begin{equation}\label{eq:62}
\boldsymbol v_0\in \boldsymbol V,~\hbox{such that}~\div \boldsymbol v_0={\sqrt {R_p}} p,~ \|\boldsymbol v_0\|_{\div}\leq \beta_d^{-1}{\sqrt {R_p}} \|p\|.
\end{equation}
Choose 
\begin{equation}\label{eq:63}
\boldsymbol w=\delta \boldsymbol u,\;\;\; \boldsymbol z=\delta \boldsymbol v-\sqrt {R_p}\boldsymbol v_0,\;\;\; q= -\delta p-\gamma^{-1}\div \boldsymbol v,
\end{equation}
where $\delta$ is a constant which we will specify later.

Again we verify the boundedness of $(\boldsymbol w,  \boldsymbol z, q)$ by $(\boldsymbol u,  \boldsymbol v, p)$ first.
It is obvious that $\|\boldsymbol w\|_{\boldsymbol U}=\delta \|\boldsymbol u\|_{\boldsymbol U}$.

{Moreover,} by \eqref{eq:63} and noting that $\gamma^{-1}\leq \rho=R_p^{-1}$, we have 
\begin{equation}\label{eq:64}
\begin{split}
({\sqrt R_p} \boldsymbol v_0, {\sqrt R_p} \boldsymbol v_0)_{\boldsymbol V}&=(R_p^{-1} {\sqrt R_p}\boldsymbol v_0), {\sqrt R_p}\boldsymbol v_0)+\gamma^{-1}(\div\sqrt {R_p} \boldsymbol v_0, \div\sqrt {R_p} \boldsymbol v_0)\\
&\leq(\boldsymbol v_0,  \boldsymbol v_0)+(\div \boldsymbol v_0, \div \boldsymbol v_0)\leq \beta^{-2}_d R_p\|p\|^2\leq \beta^{-2}_d \gamma\|p\|^2\leq\beta^{-2}_d(p,p)_P.
\end{split}
\end{equation}
Hence, we get the boundedness of $\boldsymbol z$, {i.e.},
\begin{equation}\label{eq:65}
\|\boldsymbol z\|_{\boldsymbol V}\leq \delta \|\boldsymbol v\|_{\boldsymbol V}+\beta^{-1}_d \|p\|_P.
\end{equation}
{The boundedness of $q$ follows as in~\eqref{eq:49}.}

Next we verify the coercivity of $A((\boldsymbol u,  \boldsymbol v, p), (\boldsymbol w,  \boldsymbol z, q))$
in Case~\uppercase\expandafter{\romannumeral 2}. 

{Using} the definition of $(\boldsymbol w,  \boldsymbol z, q)$ and~\eqref{eq:62}, we find 
\begin{eqnarray*}
A((\boldsymbol u,  \boldsymbol v, p), (\boldsymbol w,  \boldsymbol z, q))&=&(\ep (\boldsymbol u), \ep (\boldsymbol w)) +\lambda ( \div \boldsymbol  u, \div \boldsymbol  w)- ( p, \div \boldsymbol  w)+
  (R_p^{-1}\boldsymbol  v, \boldsymbol  z)- (p, \div \boldsymbol  z)\\
&&-(\div \boldsymbol u,q)  -(\div \boldsymbol v,q)  -\alpha_p  (p,q)\\
&=&(\ep (\boldsymbol u), \delta \ep (\boldsymbol u)) +\lambda ( \div \boldsymbol  u, \delta\div \boldsymbol  u )- ( p, \delta\div \boldsymbol  u)+(R_p^{-1}\boldsymbol  v, \delta\boldsymbol  v-\sqrt{R_p} \boldsymbol v_0)\\
&&-(p,  \delta\div \boldsymbol  v-\sqrt{R_p} \div\boldsymbol v_0)-(\div \boldsymbol u, -\delta p-\gamma^{-1}\div \boldsymbol v)\\
&&-(\div \boldsymbol v,-\delta p-\gamma^{-1}\div \boldsymbol v)  -\alpha_p  (p,-\delta p-\gamma^{-1}\div \boldsymbol v)\\
&=&\delta(\ep (\boldsymbol u), \ep (\boldsymbol u))+\delta \lambda ( \div \boldsymbol  u, \div \boldsymbol  u)-\delta(p,\div \boldsymbol  u)+\delta(R_p^{-1}\boldsymbol  v, \boldsymbol  v)-({R^{-1/2}_p}\boldsymbol  v, \boldsymbol v_0)\\
&&-\delta(p,  \div \boldsymbol  v)+(p, \sqrt{R_p} \div\boldsymbol v_0)+\delta(\div \boldsymbol u, p)+\gamma^{-1}(\div \boldsymbol u, \div \boldsymbol v)\\
&&+\delta(\div \boldsymbol v, p)+\gamma^{-1}(\div \boldsymbol v,\div \boldsymbol v)  +\delta \alpha_p  (p, p) + \alpha_p(p, \gamma^{-1}\div \boldsymbol v)\\
&=&\delta(\ep (\boldsymbol u), \ep (\boldsymbol u))+\delta \lambda ( \div \boldsymbol  u, \div \boldsymbol  u)+\delta(R_p^{-1}\boldsymbol  v, \boldsymbol  v)-({R^{-1/2}_p}\boldsymbol  v, \boldsymbol v_0)\\
&&+(p, \sqrt{R_p} \div\boldsymbol v_0)+\gamma^{-1}(\div \boldsymbol u, \div \boldsymbol v)\\
&&+\gamma^{-1}(\div \boldsymbol v,\div \boldsymbol v)  +\delta \alpha_p  (p, p) + \alpha_p(p, \gamma^{-1}\div \boldsymbol v)\\
\end{eqnarray*}
{Applying Cauchy's inequality and using~\eqref{eq:62}, we get 
\begin{eqnarray*}
&&A((\boldsymbol u,  \boldsymbol v, p), (\boldsymbol w,  \boldsymbol z, q))\\
&=&\delta(\ep (\boldsymbol u), \ep (\boldsymbol u))+\delta \lambda ( \div \boldsymbol  u, \div \boldsymbol  u)+\delta(R_p^{-1}\boldsymbol  v, \boldsymbol  v)-({R^{-1/2}_p}\boldsymbol  v, \boldsymbol v_0)\\
&&+(p, \sqrt{R_p} \div\boldsymbol v_0)+\gamma^{-1}(\div \boldsymbol u, \div \boldsymbol v)\\
&&+\gamma^{-1}(\div \boldsymbol v,\div \boldsymbol v)  +\delta \alpha_p  (p, p) + \alpha_p(p, \gamma^{-1}\div \boldsymbol v)\\
&\geq&\delta(\ep (\boldsymbol u), \ep (\boldsymbol u))+\delta \lambda ( \div \boldsymbol  u, \div \boldsymbol  u)+\delta(R_p^{-1}\boldsymbol  v, \boldsymbol  v)-\frac{1}{2}\epsilon_1^{-1}({R^{-1}_p}\boldsymbol  v, \boldsymbol v)-\frac{1}{2}\epsilon_1(\boldsymbol  v_0, \boldsymbol v_0)\\
&&+({R_p} p, p)-\frac{1}{2}\epsilon_2^{-1}\gamma^{-1}(\div \boldsymbol u, \div \boldsymbol u)-\frac{1}{2}\epsilon_2\gamma^{-1}(\div \boldsymbol v, \div \boldsymbol v)\\
&&+\gamma^{-1}(\div \boldsymbol v,\div \boldsymbol v)  +\delta \alpha_p  (p, p) -\frac{1}{2}\epsilon_4\gamma^{-1}(\div \boldsymbol v, \div \boldsymbol v) -\frac{1}{2}\epsilon_4^{-1}\alpha_p^2\gamma^{-1}(p, p)\\
&\geq&\delta(\ep (\boldsymbol u), \ep (\boldsymbol u))+\delta \lambda ( \div \boldsymbol  u, \div \boldsymbol  u)-\frac{1}{2}\epsilon_2^{-1}\gamma^{-1}(\div \boldsymbol u, \div \boldsymbol u)+(\delta-\frac{1}{2}\epsilon_1^{-1})({R^{-1}_p}\boldsymbol  v, \boldsymbol v)\\
&&+(1-\frac{1}{2}\epsilon_2-\frac{1}{2}\epsilon_4)\gamma^{-1}(\div \boldsymbol v, \div \boldsymbol v) +(1-\frac{1}{2}\epsilon_1\beta^{-2}_d)(R_p p, p)+(\delta -\frac{1}{2}\epsilon_4^{-1}\alpha_p\gamma^{-1})\alpha_p  (p, p)\\
\end{eqnarray*}
Now, letting $\epsilon_1=\displaystyle\beta_d^2, \epsilon_2=\epsilon_4=\displaystyle\frac{1}{2}$
and noting that $\rho^{-1}\leq \gamma, \lambda\geq \rho\geq \gamma^{-1}>0$ it follows that
\begin{eqnarray*}
A((\boldsymbol u,  \boldsymbol v, p), (\boldsymbol w,  \boldsymbol z, q))
&\geq&\delta(\ep (\boldsymbol u), \ep (\boldsymbol u)) +(\delta-1)\lambda (\div \boldsymbol  u, \div \boldsymbol  u)+(\delta-\frac{1}{2}\beta_d^{-2})(R_p^{-1}\boldsymbol  v, \boldsymbol  v)\\
&&+\frac{1}{2}\gamma^{-1}(\div \boldsymbol v,\div \boldsymbol v)+\frac{1}{2}(R_pp, p)+(\delta -\alpha_p\gamma^{-1})\alpha_p  (p, p)
\end{eqnarray*}
Next, we set $\delta=\max\left\{\frac{1}{2}\beta_d^{-2}+\displaystyle\frac{1}{2}, 1+\displaystyle\frac{1}{2}\right\}$,}
observe that  $\alpha_p\leq \gamma$, and finally obtain the coercivity estimate
\begin{eqnarray*}
A((\boldsymbol u,  \boldsymbol v, p), (\boldsymbol w,  \boldsymbol z, q)) 
&\geq&\frac{1}{2}(\ep (\boldsymbol u), \ep (\boldsymbol u)) +\frac{1}{2}\lambda (\div \boldsymbol  u, \div \boldsymbol  u)+\frac{1}{2}(R_p^{-1}\boldsymbol  v, \boldsymbol  v) \\
&&+\frac{1}{2}\gamma^{-1}(\div \boldsymbol v,\div \boldsymbol v)+\frac{1}{2}(R_pp, p)+\frac{1}{2}\alpha_p  (p, p) \\
&\geq&\frac{1}{2}\big(\|\boldsymbol u\|^2_{\boldsymbol U}+\|\boldsymbol  v\|^2_{\boldsymbol V}+\|p\|^2_P\big)
\end{eqnarray*}
which completes the proof.
\end{proof}
The above theorem implies the following stability estimate.
\begin{corollary}\label{eq:67}
Let $(\boldsymbol u,  \boldsymbol v, p)\in \boldsymbol U\times \boldsymbol V\times P$ be the solution
of~\eqref{eq:8a}--\eqref{eq:8c}. Then {there holds} the estimate
\begin{equation}
\|\boldsymbol u\|_{\boldsymbol U}+\|\boldsymbol  v\|_{\boldsymbol V}+\|p\|_P\leq C (\|\boldsymbol f\|_{\boldsymbol U^*}+\|g\|_{P^*}),
\end{equation} 
where $C$ is a constant independent of $\lambda, R_p^{-1}, \alpha_p$ and $\|\boldsymbol f\|_{\boldsymbol U^*}=\sup\limits_{\boldsymbol w\in \boldsymbol U}\frac{(\boldsymbol f, \boldsymbol w)}{\|\boldsymbol w\|_{\boldsymbol U}},\|g\|_{P^*}=\sup\limits_{q\in P}\frac{(g,q)}{\|q\|_P}=\gamma^{-1}\|g\|$.
\end{corollary}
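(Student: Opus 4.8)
The plan is to deduce the estimate directly from the parameter-robust inf-sup condition~\eqref{eq:43} of Theorem~\ref{Continuity:Stability}, by the usual duality argument for well-posed variational problems. Write $F(\boldsymbol w,\boldsymbol z,q):=(\boldsymbol f,\boldsymbol w)+(g,q)$ for the right-hand side functional of the weak formulation~\eqref{eq:8a}--\eqref{eq:8c}; since $(\boldsymbol u,\boldsymbol v,p)$ is the solution, $A((\boldsymbol u,\boldsymbol v,p),(\boldsymbol w,\boldsymbol z,q))=F(\boldsymbol w,\boldsymbol z,q)$ for all $(\boldsymbol w,\boldsymbol z,q)\in\boldsymbol U\times\boldsymbol V\times P$. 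The functional $F$ is bounded, with
\begin{equation*}
F(\boldsymbol w,\boldsymbol z,q)\le\|\boldsymbol f\|_{\boldsymbol U^*}\|\boldsymbol w\|_{\boldsymbol U}+\|g\|_{P^*}\|q\|_P\le\big(\|\boldsymbol f\|_{\boldsymbol U^*}+\|g\|_{P^*}\big)\big(\|\boldsymbol w\|_{\boldsymbol U}+\|\boldsymbol z\|_{\boldsymbol V}+\|q\|_P\big),
\end{equation*}
because $\|\boldsymbol w\|_{\boldsymbol U}$ and $\|q\|_P$ are each dominated by the full sum of norms; the explicit form of $\|g\|_{P^*}$ in terms of $\|g\|$ is read off from the definition~\eqref{eq:7c} of $\|\cdot\|_P$.

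Next I would apply~\eqref{eq:43} with the fixed solution triple playing the role of $(\boldsymbol u,\boldsymbol v,p)$, which gives
\begin{equation*}
\beta\,\big(\|\boldsymbol u\|_{\boldsymbol U}+\|\boldsymbol v\|_{\boldsymbol V}+\|p\|_P\big)\le\sup_{(\boldsymbol w,\boldsymbol z,q)\neq 0}\frac{A((\boldsymbol u,\boldsymbol v,p),(\boldsymbol w,\boldsymbol z,q))}{\|\boldsymbol w\|_{\boldsymbol U}+\|\boldsymbol z\|_{\boldsymbol V}+\|q\|_P}.
\end{equation*}
Replacing $A((\boldsymbol u,\boldsymbol v,p),\cdot)$ by $F$ (the weak formulation) and using the boundedness estimate for $F$ just displayed, the right-hand side is at most $\|\boldsymbol f\|_{\boldsymbol U^*}+\|g\|_{P^*}$. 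Dividing by $\beta$ yields $\|\boldsymbol u\|_{\boldsymbol U}+\|\boldsymbol v\|_{\boldsymbol V}+\|p\|_P\le\beta^{-1}\big(\|\boldsymbol f\|_{\boldsymbol U^*}+\|g\|_{P^*}\big)$, i.e.\ the assertion with $C=\beta^{-1}$; since $\beta$ is independent of $\lambda$, $R_p^{-1}$ and $\alpha_p$ by Theorem~\ref{Continuity:Stability}, so is $C$.

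I do not expect a genuine obstacle: the corollary is a routine consequence of the inf-sup stability already established. The only subtlety worth mentioning is that the statement presupposes existence and uniqueness of the solution. This follows because the bilinear form $A$ is symmetric, so~\eqref{eq:43} automatically gives the inf-sup condition with trial and test functions interchanged; together with the (already observed) boundedness of $A$, this yields well-posedness by the standard Babuska--Brezzi theory. If one prefers not to invoke symmetry, the transposed inf-sup condition can instead be obtained by repeating the construction in the proof of Theorem~\ref{Continuity:Stability} with the roles of the two arguments swapped, which carries over without change.
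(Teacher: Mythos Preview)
Your argument is correct and is precisely the standard duality argument that the paper implicitly relies on: the paper does not give a separate proof of this corollary, but simply states that it follows from Theorem~\ref{Continuity:Stability}. Your observation that $A$ is symmetric (so that the transposed inf-sup condition is automatic) is also correct for the bilinear form~\eqref{eq:42}.
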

\begin{remark}
We want to emphasize that the parameter ranges as specified in~\eqref{parameter:range}
are indeed relevant since the variations of the model parameters are quite large in many applications.
For that reason, Theorem~\ref{Continuity:Stability} is a very important and basic result that provides the
parameter-robust stability of the model~\eqref{eq:8a}--\eqref{eq:8c}.
\end{remark}
{
\begin{remark}
Define 
\begin{equation}\label{Preconditioner:B}
B:=\left[\begin{array}{ccc}
(-\div \ep-\lambda \nabla \div)^{-1} & 0 & 0 \\
0 & (R_p^{-1}I+ \gamma^{-1}\nabla \div)^{-1}& 0 \\
0& 0& (\gamma I )^{-1}  \end{array}\right].
\end{equation}
{Due to} the theory presented in~\cite{Mardal2011preconditioning},
Theorem~\ref{Continuity:Stability} implies that the 
{operator $B$ in~\eqref{Preconditioner:B} defines a norm-equivalent
(canonical) block-diagonal preconditioner for the operator $A$ in~\eqref{operator:A}
which is robust in all model parameters.}
\end{remark}
}
%\section{$P_1\times RT_0\times P_0$ is not uniform stable for $R_p^{-1}$}\label{sec:com_unstable_disc}
\begin{remark}\label{Stokes_necessary}
Note that if $\lambda \ll R_p^{-1}$ and $0\leq\alpha_p\leq \frac{1}{\lambda}$
then $\rho=\min\{\lambda,R^ {-1}_p\}=\lambda$ and the norms defined in \eqref{eq:7a}--\eqref{eq:7c}
{are given by}
\begin{eqnarray}
(\boldsymbol u, \boldsymbol w)_{\bU}&=&(\ep (\boldsymbol u), \ep (\boldsymbol w))+\lambda(\div \boldsymbol u, \div \boldsymbol w),\label{eq:norm1}\\
(\boldsymbol v, \boldsymbol z)_{\bV}&=&(R^{-1}_p \boldsymbol v,\boldsymbol z)+\lambda (\div \boldsymbol v, \div \boldsymbol z),\label{eq:norm2}\\
(p, q)_P&=&\lambda^{-1} (p,q).\label{eq:norm3}
\end{eqnarray}
Then the coercivity of $(\ep (\boldsymbol u), \ep (\boldsymbol u)) +\lambda (\div \bm u, \div \bm u)+(R_p^{-1}\bm v, \bm v)$ on the kernel set 
$$
\bm Z=\{(\bm u, \bm v)\in \bm U\times\bm V: (\div \bm u, q)+(\div\bm  v, q)=0, \forall q\in P\}
$$
can be verified by direct computation.
However, in this case the %inf-sup condition from Lemma \ref{Hdiv:inf-sup}
%\begin{equation}
%\inf_{q_h\in P_h}\sup_{v_h\in \bm V_h}\frac{(\div \bm v_h, q_h)}{\|\bm v_h\|_V\|q_h\|_P}\geq \beta_d,
%\end{equation}
%does not {hold for all $\beta_d>0$ independent of $R_p^{-1}$}.
{
$H(\div)$ inf-sup condition
\begin{equation}\label{eq:h_div_inf_sup}
\inf_{q\in P}\sup_{\bm{v}\in \bm{V}}\frac{(\operatorname{div}\bm{v},q)}{\|\bm{v}\|_{\boldsymbol V}\|q\|_P}\geq \beta_{v}>0
\end{equation}
fails if $R_p^{-1} \gg \lambda$, for instance, $\lambda \eqsim 1$ and $R_p^{-1} \gg 1$.
}

Hence, in order to obtain the inf-sup condition
for $(\div \bm u, q)+(\div \bm v, q)$, namely, 
\begin{equation}\label{eq:hdiv_Stokes_inf_sup}
\inf_{q\in P}\sup_{(\bm u, \bm v)\in \bm U\times \bm V}\frac{(\div \bm u, q)+(\div \bm v, q)}{(\|\bm u\|_{\bm U}+\|\bm v\|_{\boldsymbol V})\|q\|_P}\geq \beta_{su}>0,
\end{equation}
the Stokes inf-sup condition {from} Lemma \ref{Stokes:inf-sup} has to be satisfied at the discrete level, as we can see by
choosing $(\bm u, \bm v)=(\bm u, 0)\in \bm U\times \bm V$ in \eqref{eq:hdiv_Stokes_inf_sup}.

{From the above observation, we conclude that} we need to choose a proper space for the approximation of the
displacement field $\bm u$, even if $\lambda$ is small, in order to satisfy the Stokes inf-sup condition stated
in Lemma~\ref{Stokes:inf-sup} {at the} discrete level.
This shows that the discretization of \eqref{eq:8a}--\eqref{eq:8c} using on the spaces $P_1\times RT_0\times P_0$
can not be uniformly stable with respect to all model parameters!
\end{remark}

Therefore, in our paper, we use $H(\div)$ conforming spaces such as $BDM, RT$ and $BDFM$ to replace $P_1$.
Further, {in} this way, we can preserve the divergence condition exactly, which means the fluid mass conservation
is fully preserved. {Details will be presented in the following Section~\ref{sec:uni_stab_disc_model}.}

\section{Uniformly stable discretizations of the model}\label{sec:uni_stab_disc_model}

There are various discretizations that meet the requirements for the proof of the full
parameter-robust stability that will be presented in this section. {They include} conforming
as well as nonconforming methods. In general, whenever $U_h/P_h$ is a Stokes-stable pair
and $V_h/P_h$ satisfies the $H(\div)$ inf-sup condition similar to~\eqref{inf-sup}, the norm
that we have proposed in Section \ref{sec:par_rob_stab_model} allows to prove the full
parameter-robust stability
%using a proof that follows the same ideas as used in
{using similar arguments as in}
the proof of Theorem \ref{Dis:Stability}. To give two popular examples, the triplet $CR/RT_0/P_0$
together with stabilization does result in a parameter-robust stable discretization of the Biot
model if the norms are defined as in Section \ref{sec:par_rob_stab_model}. The same is true
for the conforming discretization based on the spaces $P_2/RT_0/P_0$. However, these finite
element methods can not preserve the fluid mass conservation exactly and globally although
they can achieve parameter-robustness. 

Therefore, in this section, motivated by {the works} \cite{cockburn2007note, hong2016robust, honguniformly},
we propose discretizations of the Biot's model problem~\eqref{eq:8a}--\eqref{eq:8c}. {These discretizations}
preserve the divergence condition (namely equation \eqref{eq:5c}) exactly and globally, which means an exact
conservation of the fluid mass. Furthermore, the discretizations are also locking-free when the Lam\'e parameter
$\lambda$ tends to $\infty$. First we introduce some notations.

\subsection{Preliminaries and notation}
{By $\mathcal{T}_h$ we denote} a shape-regular triangulation of mesh-size $h$ of
the domain $\Omega$ into triangles $\{K\}$. We further denote by
$\mathcal{E}_h^{I}$ the set of all interior edges (or faces) of  $\mathcal{T}_h$ and by
$\mathcal{E}_h^{B}$ the set of all boundary edges (or faces); we set
$\mathcal{E}_h=\mathcal{E}_h^{I}\cup \mathcal{E}_h^{B}$.

For $s\geq 1$, we define
$$
H^s(\mathcal{T}_h)=\{\phi\in L^2(\Omega), \mbox{ such that } \phi|_K\in H^s(K) \mbox{ for all } K\in \mathcal{T}_h \}.
$$

The vector functions are represented column-wise. 

{As we consider discontinuous Galerkin (DG) discretizations}, we define some trace operators next.
Let $e = \partial K_1 \cap \partial K_2$ be the common boundary (interface) of two subdomains $K_1$
and $K_2$ in $\mathcal{T}_h$ , and $\bm n_1$ and $\bm n_2$ be unit normal vectors to $e$ pointing to the
exterior of $K_1$ and $K_2$, respectively. For any edge (or face) $e
\in \mathcal{E}_h^{I}$ and a scalar $q\in H^1(\mathcal{T}_h)$, vector $\bm v \in H^1(\mathcal{T}_h)^d$ and tensor
$\bm \tau \in H^1(\mathcal{T}_h)^{d\times d}$, we define the
averages
\begin{equation*}
\begin{split}
\{\bm v\} &=\frac{1}{2}(\bm v|_{\partial K_1\cap e}\cdot \bm n_1-\bm
v|_{\partial K_2\cap e}\cdot \bm n_2), \quad 
\{\bm \tau\}=\frac{1}{2}(\bm \tau|_{\partial K_1\cap
e} \bm n_1-\bm \tau|_{\partial K_2\cap e} \bm n_2),
\end{split}
\end{equation*}
and jumps
\begin{equation*}
[q]=q|_{\partial K_1\cap e}-q|_{\partial K_2\cap e},\quad
[\bm v]=\bm v|_{\partial K_1\cap e}-\bm v|_{\partial K_2\cap e},\quad
 \Lbrack\bm v\Rbrack=\bm v|_{\partial K_1\cap e}\odot \bm n_1+\bm v|_{\partial K_2\cap e}\odot \bm n_2,
\end{equation*}
where $\bm v \odot \bm n=\frac{1}{2}(\bm{v} \bm{n}^T+\bm n \bm v^T)$ is the 
symmetric part of the tensor product of $\bm v$ and $\bm n$.

When $e \in  \mathcal{E}_h^{B}$ then the above quantities are defined as
\[
\{\bm v\}=\bm v |_{e}\cdot \bm n,\quad
\{\bm \tau\}=\bm \tau|_{e}\bm n, \quad
[q]=q|_{e}, ~~ [\bm v]=\bm v|_{e}, \quad  \Lbrack\bm v\Rbrack=\bm v|_{e}\odot \bm n.
\]
If $\bm n_K$ is the outward unit normal to $\partial K$, it is easy to check that
\begin{equation}\label{eq:68}
\sum_{K\in \mathcal{T}_h}\int_{\partial K}\bm v\cdot \bm n_K q ds=\sum_{e\in \mathcal{E}_h}\int_{e}\{\bm v\} [q] ds ,\quad\mbox{for all}\quad \bm v\in H(\div ;\Omega),\quad\mbox{for all}\quad q\in H^1(\mathcal{T}_h).
\end{equation}
Also, for $\bm \tau \in H^1(\Omega)^{d\times d}$ and for all $\bm v\in H^1(\mathcal{T}_h)^d$, we have
\begin{equation}\label{eq:69}
\sum_{K\in \mathcal{T}_h}\int_{\partial K}(\bm \tau\bm n_K)\cdot  \bm vds=\sum_{e\in \mathcal{E}_h}\int_{e}\{\bm \tau\}\cdot [\bm v] ds.
\end{equation}

The finite element spaces are denoted by
$$
\bm U_h=\{\bm u \in H(\div ;\Omega):\bm u|_K \in \bm U(K),~K \in \mathcal{T}_h;~ \bm u \cdot
\bm n=0~\hbox{on}~\partial \Omega\},
$$
$$
\bm V_h=\{\bm v \in H(\div ;\Omega):\bm v|_K \in \bm V(K),~K \in \mathcal{T}_h;~ \bm v \cdot
\bm n=0~\hbox{on}~\partial \Omega\},
$$
$$
P_h=\{q \in L^2(\Omega):q|_K \in Q(K),~K \in \mathcal{T}_h; ~\int_{\Omega}q dx=0\}.
$$
{The discretizations that we consider here, define the local spaces $\bm U(K)/\bm V(K)/Q(K)$
via the triplets $RT_{l}(K)/RT_{l-1}(K)/P_{l-1}(K)$, or $BDM_l(K)/RT_{l-1}(K)/P_{l-1}(K)$, or
$BDFM_l(K)/RT_{l-1}(K)/P_{l-1}(K)$. Note that for all these choices the important 
condition $\div  \bm U(K)=\div  \bm V(K)=Q(K)$ is satisfied.}

We recall the basic approximation properties of these spaces: for all
$K\in \mathcal{T}_h$ and for all $\bm u \in H^s(K)^d$, there exists $\bm u_I\in
\bm U(K)$ such that
\begin{equation}\label{eq:70}
\|\bm u-\bm u_I\|_{0,K}+h_K|\bm u-\bm u_I|_{1,K}+h_K^2|\bm u-\bm
u_I|_{2,K}
\leq C h_K^s|\bm u|_{s,K}, ~2\le s \leq l+1.
\end{equation}
\subsection{DG discretization}
We note that according to the definition of $\bm U_h$, the normal component of any $\bm u\in \bm U_h$ is continuous
on the internal edges and vanishes on the boundary edges. Therefore, by splitting a vector  $\bm u\in \bm U_h$ into
its  normal and tangential components $\bm u_n$ and $\bm u_t$
\begin{equation}\label{eq:71}
\bm u_n:=(\bm u\cdot \bm n)\bm n,\quad \bm u_t:=\bm u- \bm u_n,
\end{equation}
we have 
\begin{equation}\label{eq:72}
\mbox{for all}\quad e\in \mathcal{E}_h~~\int_e[\bm u_n]\cdot \bm \tau ds=0,\quad\mbox{for all}\quad \bm \tau\in H^1(\mathcal{T}_h)^d, \bm u\in \bm U_h,
\end{equation}
implying that 
\begin{equation}\label{eq:73}
\mbox{for all}\quad e\in \mathcal{E}_h~~\int_e[\bm u]\cdot \bm \tau ds=\int_e[\bm v_t]\cdot \bm \tau ds=0,\quad\mbox{for all}\quad \bm  \tau\in H^1(\mathcal{T}_h)^d,\bm u \in \bm U_h.
\end{equation}
A direct computation 
shows that
\begin{equation}\label{eq:74}
\Lbrack \bm u_t\Rbrack:\Lbrack \bm w_t\Rbrack=\frac12[\bm u_t] \cdot [\bm w_t].
\end{equation}

Therefore, the discretization of the variational problem~\eqref{eq:8a}--\eqref{eq:8c} is given as follows.
Find $(\boldsymbol u_h, \boldsymbol v_h, p_h)\in \boldsymbol U_h\times\boldsymbol V_h\times P_h$, such that for
any $(\boldsymbol w_h, \boldsymbol z_h, q_h)\in \boldsymbol U_h\times\boldsymbol V_h\times P_h$
\begin{subequations}\label{eq:75-77}
\begin{eqnarray}
 a_h(\bm u_h,\bm w_h) +\lambda ( \div \boldsymbol  u_h, \div \boldsymbol  w_h)- ( p_h, \div \boldsymbol  w_h)&=&(\boldsymbol f, \boldsymbol w_h),\label{eq:75}\\
  (R_p^{-1}\boldsymbol  v_h, \boldsymbol  z_h)- (p_h, \div \boldsymbol  z_h)&=&0, \label{eq:76} \\
-(\div \boldsymbol u_h,q_h)  -(\div \boldsymbol v_h,q_h)  -\alpha_p  (p_h,q_h)&=&(g,q_h), \label{eq:77}
\end{eqnarray}
\end{subequations}
where
\begin{eqnarray}
a_h(\bm u,\bm w)&=&\label{78}
\sum _{K \in \mathcal{T}_h} \int_K \varepsilon(\bm{u}) :
\varepsilon(\bm{w}) dx-\sum_{e \in \mathcal{E}_h} \int_e \{\varepsilon(\bm{u})\} \cdot [\bm w_t] ds\\
&&\nonumber-\sum _{e \in \mathcal{E}_h} \int_e \{\varepsilon(\bm{w})\} \cdot [\bm u_t]ds+\sum _{e
\in \mathcal{E}_h} \int_e \eta h_e^{-1}[ \bm u_t] \cdot [\bm w_t] ds,
\end{eqnarray}
and $\eta $ is {a stabilization} parameter which is independent of $h, \lambda,R_p^{-1}, \alpha_p$.

For any $\bm u\in  H^1(\mathcal{T}_h)^d$, we introduce the mesh dependent norms:
\begin{eqnarray*}
\|\bm{u}\|_h^2&=&\sum _{K \in \mathcal{T}_h} 
\|\varepsilon(\bm{u})\|_{0,K}^2+\sum _{e \in \mathcal{E}_h} h_e^{-1}\|[ \bm u_t]\|_{0,e}^2, \\
\|\bm u\|_{1,h}^2&=&\sum _{K \in \mathcal{T}_h} \|\nabla\bm{u}\|_{0,K}^2+\sum _{e \in \mathcal{E}_h} h_e^{-1}\|[ \bm{u}_t]\|_{0,e}^2,\\
\end{eqnarray*}
Next, for $\bm u\in H^2(\mathcal{T}_h)^d$, we define the ``DG''-norm
\begin{equation}\label{DGnorm}
\|\bm u\|^2_{DG}=\sum _{K \in \mathcal{T}_h} \|\nabla\bm{u}\|_{0,K}^2+\sum _{e \in \mathcal{E}_h} h_e^{-1}\|[ \bm{u}_t]\|_{0,e}^2+\sum _{K \in \mathcal{T}_h}h_K^2|\bm{u}|^2_{2,K},
\end{equation}
and, finally, the mesh-dependent norm $\|\bm \cdot \|_{\bm U_h}$ by
\begin{equation}\label{U_hnorm}
\|\bm u\|^2_{\bm U_h}=\|\bm u\|^2_{DG}+\lambda \|\div \bm u\|^2.
\end{equation}
We now summarize several results on well-posedness and approximation
properties of the DG formulation, {see, e.g.~\cite{hong2016robust, honguniformly}.}

\begin{itemize}
\item From the discrete version of Korn's inequality we have that the norms $\|\cdot\|_{DG}$,
$\|\cdot\|_h$, and $\|\cdot\|_{1,h}$ are equivalent on $\bm U_h$, namely,
\begin{equation}\label{Korninequality}
\|\bm{u}\|_{DG}\eqsim  \|\bm{u}\|_h\eqsim\|\bm u\|_{1,h},\quad\mbox{for all}\quad~\bm u \in \bm U_h .
\end{equation}
\item The bilinear form $a_h(\cdot,\cdot)$,
introduced in~\eqref{78} is continuous and we have
\begin{eqnarray}\label{continuity:a_h}
|a_h(\bm u,\bm w)|&\lesssim& \| \bm u  \|_{DG}  \| \bm w  \|_{DG},\quad\mbox{for all}\quad \bm u,~\bm w\in H^2(\mathcal{T}_h)^d.
\end{eqnarray}
\item For our choice of the finite element spaces $\bm{V}_h$ and {$P_h$} we
have the following inf-sup conditions
%$b_h(\cdot,\cdot)$
\begin{equation}\label{inf-sup}
  \inf_{q_h\in P_h}\sup_{\bm{u}_h\in \bm{U}_h}\frac{(\operatorname{div}\bm{u}_h,q_h)}{\|\bm{u}_h\|_{1,h}\|q_h\|}\geq \beta_{sd}>0,\,\,\,\,\,\,\, \inf_{q_h\in P_h}\sup_{\bm{v}_h\in \bm{V}_h}\frac{(\operatorname{div}\bm{v}_h,q_h)}{\|\bm{v}_h\|_{\div}\|q_h\|}\geq \beta_{dd}>0,
\end{equation}
where $\beta_{sd}$ and $\beta_{dd}$ are constant independent of the parameters $\lambda, R_p^{-1}, \alpha_p$ and the mesh size $h$.
\item We also have that $a_h(\cdot,\cdot)$ is coercive, and the proof
  of this fact parallels the proofs of similar results.
\begin{equation}\label{coercivity:a_h}
a_h(\bm{u}_h,\bm{u}_h)\geq \alpha_a \|\bm{u}_h\|^2_h,\quad\mbox{for all}\quad~\bm{u}_h\in\bm{U}_h,
\end{equation}
where $\alpha_a$ is a positive constant independent of parameters $\lambda, R_p^{-1}, \alpha_p$ and the mesh size  $h$.
\end{itemize}

Related to the discrete problem~\eqref{eq:75}-\eqref{eq:77} we introduce the bilinear form 
\begin{equation}\label{eq:79}
\begin{split}
A_h((\boldsymbol u_h,  \boldsymbol v_h, p_h), (\boldsymbol w_h,  \boldsymbol z_h, q_h))=&a_h(\bm u_h,\bm v_h)+\lambda ( \div \boldsymbol  u_h, \div \boldsymbol  w_h)- ( p_h, \div \boldsymbol  w_h)+
  (R_p^{-1}\boldsymbol  v_h, \boldsymbol  z_h)\\
  &- (p_h, \div \boldsymbol  z_h)-(\div \boldsymbol u_h,q_h)  -(\div \boldsymbol v_h,q_h)  -\alpha_p  (p_h,q_h).
\end{split}
\end{equation}
{In view of the definitions of the} norms $\|\cdot\|_{\bm U_h},\|\cdot\|_{\bm V}$ and $\|\cdot\|_P$, the boundedness of the $A_h((\boldsymbol u_h,  \boldsymbol v_h, p_h), (\boldsymbol w_h,  \boldsymbol z_h, q_h))$
is obvious, and we come to our second main result.
\begin{theorem}\label{Dis:Stability}
There exits a constant $\beta_0>0$ independent of  the parameters $\lambda,~R_p^{-1},~\alpha_p$ and
the mesh size~$h$, such that 
\begin{equation}\label{eq:80}
\inf_{(\boldsymbol u_h,  \boldsymbol v_h, p_h)\in \boldsymbol U_h\times\boldsymbol V_h\times P_h} \sup_{(\boldsymbol w_h,\boldsymbol z_h, q_h)\in \boldsymbol U_h\times\boldsymbol V_h\times P_h}
\frac{A_h((\boldsymbol u_h,  \boldsymbol v_h, p_h), (\boldsymbol w_h,  \boldsymbol z_h, q_h))}
{(\|\boldsymbol u_h\|_{\bm U_h}+\|\boldsymbol v_h\|_{\boldsymbol V}+\|p_h\|_{P})(\|\boldsymbol w_h\|_{\bm U_h}+\|\boldsymbol z_h\|_{\boldsymbol V}+\|q_h\|_{P})}\geq \beta_0.
\end{equation}
\end{theorem}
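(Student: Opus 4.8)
The plan is to mirror the proof of Theorem~\ref{Continuity:Stability} at the discrete level, replacing the continuous inf-sup lemmas by their discrete counterparts in~\eqref{inf-sup} and the continuous coercivity of the elasticity form by the discrete coercivity~\eqref{coercivity:a_h} together with Korn's inequality~\eqref{Korninequality}. As before, I would split into the two cases $\rho=\min\{\lambda,R_p^{-1}\}=\lambda$ and $\rho=R_p^{-1}$, keeping the quantities $\rho=\min\{\lambda,R_p^{-1}\}$ and $\gamma=\max\{\rho^{-1},\alpha_p\}$ exactly as in~\eqref{eq:6}. For a given $(\boldsymbol u_h,\boldsymbol v_h,p_h)\in\boldsymbol U_h\times\boldsymbol V_h\times P_h$, in Case~I I would use the discrete Stokes inf-sup condition (the first inequality in~\eqref{inf-sup}, together with $\|\cdot\|_{1,h}\eqsim\|\cdot\|_{DG}\le\|\cdot\|_{\boldsymbol U_h}$) to pick $\boldsymbol u_{0,h}\in\boldsymbol U_h$ with $\div\boldsymbol u_{0,h}=\lambda^{-1/2}p_h$ and $\|\boldsymbol u_{0,h}\|_{1,h}\lesssim\lambda^{-1/2}\|p_h\|$, and then set
\begin{equation*}
\boldsymbol w_h=\delta\boldsymbol u_h-\tfrac{1}{\sqrt\lambda}\boldsymbol u_{0,h},\qquad \boldsymbol z_h=\delta\boldsymbol v_h,\qquad q_h=-\delta p_h-\gamma^{-1}\div\boldsymbol v_h,
\end{equation*}
exactly as in~\eqref{eq:wzq}; note $\div\boldsymbol v_h\in P_h$, so $q_h\in P_h$ is admissible. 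In Case~II I would instead use the discrete $H(\div)$ inf-sup condition (the second inequality in~\eqref{inf-sup}) to pick $\boldsymbol v_{0,h}\in\boldsymbol V_h$ with $\div\boldsymbol v_{0,h}=\sqrt{R_p}\,p_h$ and choose the test functions as in~\eqref{eq:63}.

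The boundedness step $\|\boldsymbol w_h\|_{\boldsymbol U_h}+\|\boldsymbol z_h\|_{\boldsymbol V}+\|q_h\|_P\lesssim\|\boldsymbol u_h\|_{\boldsymbol U_h}+\|\boldsymbol v_h\|_{\boldsymbol V}+\|p_h\|_P$ goes through essentially verbatim as in~\eqref{eq:46}--\eqref{eq:boundedness}: the only change is that the elasticity seminorm in $\|\cdot\|_{\boldsymbol U_h}$ is the broken-plus-jump norm $\|\cdot\|_h$ rather than $\|\varepsilon(\cdot)\|_0$, but by~\eqref{Korninequality} the bound $\|\boldsymbol u_{0,h}\|_{\boldsymbol U_h}^2=\|\boldsymbol u_{0,h}\|_h^2+\lambda\|\div\boldsymbol u_{0,h}\|^2\lesssim\lambda^{-1}\|p_h\|^2+\lambda\cdot\lambda^{-1}\|p_h\|^2\lesssim\gamma^{-1}\|p_h\|^2=\|p_h\|_P^2$ (using $\gamma^{-1}\le\rho=\lambda$ and $\lambda\ge1$) still holds, and the estimate~\eqref{eq:48} for $\|\gamma^{-1}\div\boldsymbol v_h\|_P\le\|\boldsymbol v_h\|_{\boldsymbol V}$ is unchanged. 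For the coercivity step I would expand $A_h((\boldsymbol u_h,\boldsymbol v_h,p_h),(\boldsymbol w_h,\boldsymbol z_h,q_h))$ following the same algebra as in the proof of Theorem~\ref{Continuity:Stability}; the term that was $\delta(\varepsilon(\boldsymbol u),\varepsilon(\boldsymbol u))$ is now $\delta\,a_h(\boldsymbol u_h,\boldsymbol u_h)$, which by~\eqref{coercivity:a_h} is $\ge\delta\alpha_a\|\boldsymbol u_h\|_h^2$, and the cross term $-\lambda^{-1/2}a_h(\boldsymbol u_h,\boldsymbol u_{0,h})$ is absorbed using the continuity~\eqref{continuity:a_h} plus Young's inequality — in other words the scalar $(\varepsilon(\cdot),\varepsilon(\cdot))$-manipulations are replaced by $a_h$-manipulations with the constants $\alpha_a$ and the hidden constant in~\eqref{continuity:a_h} appearing in the choice of the $\epsilon_i$ and of $\delta$. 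All remaining terms involve only $\div\boldsymbol u_h$, $\div\boldsymbol v_h$, $\boldsymbol v_h$, $p_h$ and the parameters, so they are handled exactly as in~\eqref{eq:60} via the relations $\rho^{-1}\le\gamma$, $\lambda\ge\gamma^{-1}>0$, $\lambda\ge1$, $\alpha_p\le\gamma$. Choosing $\delta$ large enough (depending only on $\alpha_a$, the continuity constant of $a_h$, $\beta_{sd}$ in Case~I and $\beta_{dd}$ in Case~II) yields the coercivity bound $A_h(\cdots)\gtrsim\|\boldsymbol u_h\|_{\boldsymbol U_h}^2+\|\boldsymbol v_h\|_{\boldsymbol V}^2+\|p_h\|_P^2$, and combining with the boundedness bound gives~\eqref{eq:80} with $\beta_0$ independent of all parameters and of $h$.

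I anticipate the main obstacle to be purely bookkeeping rather than conceptual: one must be careful that the elasticity "energy" controlled at the end is $\|\boldsymbol u_h\|_h^2$ (hence, by Korn~\eqref{Korninequality}, $\|\boldsymbol u_h\|_{DG}^2$ and thus the full $\|\boldsymbol u_h\|_{\boldsymbol U_h}^2=\|\boldsymbol u_h\|_{DG}^2+\lambda\|\div\boldsymbol u_h\|^2$), and that the coercivity constant $\alpha_a$ of $a_h$ does not help control $\lambda\|\div\boldsymbol u_h\|^2$ — that term is retained separately through the explicit $\delta\lambda(\div\boldsymbol u_h,\div\boldsymbol u_h)$ contribution exactly as in the continuous proof. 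A second minor point is verifying admissibility of the test functions, i.e.\ that $\div\boldsymbol v_h\in P_h$ (which holds because $\div\boldsymbol V_h=Q_h$ elementwise and $\div\boldsymbol v_h$ has zero mean since $\boldsymbol v_h\cdot\boldsymbol n=0$ on $\partial\Omega$) so that $q_h=-\delta p_h-\gamma^{-1}\div\boldsymbol v_h$ and $\boldsymbol u_{0,h}$, $\boldsymbol v_{0,h}$ genuinely lie in the discrete spaces. Since these are already guaranteed by the structural properties $\div\boldsymbol U(K)=\div\boldsymbol V(K)=Q(K)$ recorded before the theorem and by~\eqref{inf-sup}, no new ingredient is needed, and the proof reduces to repeating the two-case argument of Theorem~\ref{Continuity:Stability} with $a_h$ in place of $(\varepsilon(\cdot),\varepsilon(\cdot))$.
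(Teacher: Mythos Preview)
Your proposal is correct and follows essentially the same approach as the paper's own proof: the paper also splits into the two cases $\rho=\lambda$ and $\rho=R_p^{-1}$, uses the discrete inf-sup conditions~\eqref{inf-sup} to construct $\boldsymbol u_{h,0}$ (respectively $\boldsymbol v_{h,0}$), chooses the test triple exactly as in~\eqref{eq:wzq} and~\eqref{eq:63}, and replaces the continuous elasticity manipulations by $a_h$-manipulations via~\eqref{coercivity:a_h}, \eqref{continuity:a_h} and~\eqref{Korninequality}. Apart from a small slip in your boundedness sketch (you should be bounding $\|\lambda^{-1/2}\boldsymbol u_{0,h}\|_{\boldsymbol U_h}$ rather than $\|\boldsymbol u_{0,h}\|_{\boldsymbol U_h}$, and $\|p_h\|_P^2=\gamma\|p_h\|^2$, not $\gamma^{-1}\|p_h\|^2$), the argument is the same as the paper's.
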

\begin{proof}
Case  \uppercase\expandafter{\romannumeral 1}:
\begin{equation}\label{eq:81}
\rho=\min\{\lambda,R^ {-1}_p\}=\lambda, ~\hbox{hence} ~\lambda\leq R_p^{-1}, ~\gamma^{-1}\leq \rho=\lambda.
\end{equation}
For any $(\boldsymbol u_h,  \boldsymbol v_h, p_h)\in \boldsymbol U_h\times\boldsymbol V_h\times P_h$, by the first inequality in  \eqref{inf-sup}, there exists 
\begin{equation}\label{eq:82}
\boldsymbol u_{h,0}\in \boldsymbol U_h,~\hbox{such that}~\div \boldsymbol u_{h,0}=\frac{1}{\sqrt \lambda} p_h,~ \|\boldsymbol u_{h,0}\|_{1,h}\leq \beta_{sd}^{-1}\frac{1}{\sqrt \lambda} \|p_h\|.
\end{equation}
Choose 
\begin{equation}\label{eq:83}
\boldsymbol w_h=\delta \boldsymbol u_h-\frac{1}{\sqrt \lambda} \boldsymbol u_{h,0}, \boldsymbol z_h=\delta \boldsymbol v_h, q_h= -\delta p_h-\gamma^{-1}\div \boldsymbol v_h,
\end{equation}
where the constant $\delta$ will be determined later.

We verify first the boundedness of $(\boldsymbol w_h,  \boldsymbol z_h, q_h)$ by $(\boldsymbol u_h,  \boldsymbol v_h, p_h)$.

By \eqref{eq:82}, the equivalence between norms $\|\cdot\|_{DG}$ and $\|\cdot\|_{1,h}$, namely \eqref{Korninequality},
and noting that $\gamma^{-1}\leq \rho=\lambda$, we have 
\begin{equation}\label{eq:84}
\begin{split}
\|\frac{1}{\sqrt \lambda} \boldsymbol u_{h,0}\|^2_{\boldsymbol U_h}&=\|\frac{1}{\sqrt \lambda}\boldsymbol u_{h,0}\|^2_{DG}+\lambda (\div \frac{1}{\sqrt \lambda} \boldsymbol u_{h,0}, \div \frac{1}{\sqrt \lambda} \boldsymbol u_{h,0})\\
&\leq C_0\|\frac{1}{\sqrt \lambda} \boldsymbol u_{h,0}\|_{1,h}^2+(\div \boldsymbol u_{h,0}, \div \boldsymbol u_{h,0})~~~ (\hbox{by}~~ \eqref{Korninequality})\\
&\leq \frac{1}{\lambda}C_0 \beta^{-2}_{sd} \frac{1}{\lambda}\|p\|^2+
\frac{1}{\lambda}(p,p)\leq (\frac{1}{\lambda} C_0\beta^{-2}_{sd}+1)\frac{1}{\lambda} \|p_h\|^2\\
&\leq (\frac{1}{\lambda} C_0\beta^{-2}_{sd}+1) \gamma \|p_h\|^2=(\frac{1}{\lambda} C_0\beta^{-2}_{sd}+1) (p_h,p_h)_P.
\end{split}
\end{equation}
Therefore, by taking into account that $1\leq \lambda$ we get for $\boldsymbol w_{h}$ the estimate
\begin{equation}\label{eq:85}
\|\boldsymbol w_h\|_{\boldsymbol U_h}\leq \delta \|\boldsymbol u_h\|_{\boldsymbol U_h}+\sqrt{\lambda ^{-1} C_0\beta^{-2}_{sd}+1} \|p_h\|_P\leq  \delta \|\boldsymbol u_h\|_{\boldsymbol U_h}+ \sqrt{C_0\beta^{-2}_{sd}+1} \|p_h\|_P.
\end{equation}
Obviously we have $\|\boldsymbol z_h\|_{\boldsymbol V}= \delta\|\boldsymbol v_h\|_{\boldsymbol V}$
{and it remains to bound} $q_h$.
From
\begin{equation}\label{eq:86}
(\gamma^{-1} \div \boldsymbol v_h, \gamma^{-1} \div \boldsymbol v_h)_P=\gamma (\gamma^{-1} \div \boldsymbol v_h, \gamma^{-1} \div \boldsymbol v_h)=\gamma^{-1} (\div \boldsymbol v_h, \div \boldsymbol v_h)\leq (\boldsymbol v_h, \boldsymbol  v_h)_{\boldsymbol V}
\end{equation}
it follows that
\begin{equation}\label{eq:87}
\|q_h\|_P\leq \delta \|p_h\|_P+\|\boldsymbol  v_h\|_{\boldsymbol V}.
\end{equation}
Next we {establish} the coercivity of $A_h((\boldsymbol u_h,  \boldsymbol v_h, p_h), (\boldsymbol w_h,  \boldsymbol z_h, q_h))$.

Using the definition of~$(\boldsymbol w_h,  \boldsymbol z_h, q_h)$ and \eqref{eq:82}, we find
\begin{eqnarray*}
&&A_h((\boldsymbol u_h,  \boldsymbol v_h, p_h), (\boldsymbol w_h,  \boldsymbol z_h, q_h))\\
&=&a_h(\bm{u}_h,\bm{w}_h)+\lambda ( \div \boldsymbol  u_h, \div \boldsymbol  w_h)- ( p_h, \div \boldsymbol  w_h)+
  (R_p^{-1}\boldsymbol  v_h, \boldsymbol  z_h)- (p_h, \div \boldsymbol  z_h)\\
&&-(\div \boldsymbol u_h,q_h)  -(\div \boldsymbol v_h,q_h)  -\alpha_p  (p_h,q_h)\\
&=&a_h(\bm{u}_h,\delta\boldsymbol  u_h-\frac{1}{\sqrt \lambda}\boldsymbol u_{h,0}) +\lambda ( \div \boldsymbol  u_h, \delta\div \boldsymbol  u_h-\frac{1}{\sqrt \lambda}\div \boldsymbol u_{h,0})\\
&&-( p_h, \delta\div \boldsymbol  u_h-\frac{1}{\sqrt \lambda}\div \boldsymbol u_{h,0})+(R_p^{-1}\boldsymbol  v_h, \delta\boldsymbol  v_h)- (p_h,  \delta\div \boldsymbol  v_h)\\
&&-(\div \boldsymbol u_h, -\delta p_h-\gamma^{-1}\div \boldsymbol v_h)-(\div \boldsymbol v_h,-\delta p_h-\gamma^{-1}\div \boldsymbol v_h)-\alpha_p  (p_h,-\delta p_h-\gamma^{-1}\div \boldsymbol v_h)\\
&=&\delta a_h(\bm{u}_h,\boldsymbol  u_h)-\frac{1}{\sqrt \lambda}a_h(\bm{u}_h,\boldsymbol u_{h,0}) +\delta \lambda ( \div \boldsymbol  u_h, \div \boldsymbol  u_h)-{\sqrt \lambda}(\div \boldsymbol  u_h,\div \boldsymbol u_{h,0} )\\
&&- \delta(p_h,\div \boldsymbol  u_h)+\frac{1}{\sqrt \lambda}(p_h,\div \boldsymbol u_{h,0})
+\delta(R_p^{-1}\boldsymbol  v_h, \boldsymbol  v_h)- \delta(p_h,  \div \boldsymbol  v_h)+\delta(\div \boldsymbol u_h, p_h)\\
&&+\gamma^{-1}(\div \boldsymbol u_h, \div \boldsymbol v_h)+\delta(\div \boldsymbol v_h, p_h)+\gamma^{-1}(\div \boldsymbol v_h,\div \boldsymbol v_h)+\delta \alpha_p  (p_h, p_h) + \alpha_p(p_h, \gamma^{-1}\div \boldsymbol v_h)\\
&=&\delta a_h(\bm{u}_h,\boldsymbol  u_h)-\frac{1}{\sqrt \lambda}a_h(\bm{u}_h,\boldsymbol u_{h,0})+\delta \lambda ( \div \boldsymbol  u_h, \div \boldsymbol  u_h)-(\div \boldsymbol  u_h, p_h)+\frac{1}{\lambda}(p_h, p_h)~~~ (\hbox{by}~\eqref{eq:82})\\
&&+\delta(R_p^{-1}\boldsymbol  v_h, \boldsymbol  v_h)+\gamma^{-1}(\div \boldsymbol u_h, \div \boldsymbol v_h)+\gamma^{-1}(\div \boldsymbol v_h,\div \boldsymbol v_h)+\delta \alpha_p  (p_h, p_h) + \alpha_p(p_h, \gamma^{-1}\div \boldsymbol v_h)
\end{eqnarray*}
{Next we apply Cauchy's inequality, use the coercivity and the continuity of $a_h(\cdot,\cdot)$, the equivalence of the
norms $\|\cdot\|_{DG}$ and $\|\cdot\|_{1,h}$, and \eqref{eq:82}, to get 
\begin{eqnarray*}
&&A_h((\boldsymbol u_h,  \boldsymbol v_h, p_h), (\boldsymbol w_h,  \boldsymbol z_h, p_h))\\
&=&\delta a_h(\bm{u}_h,\boldsymbol  u_h)-\frac{1}{\sqrt \lambda}a_h(\bm{u}_h,\boldsymbol u_{h,0})
+\delta \lambda ( \div \boldsymbol  u_h, \div \boldsymbol  u_h)-(\div \boldsymbol  u_h, p_h)+\frac{1}{\lambda}(p_h, p_h)\\
&&+\delta(R_p^{-1}\boldsymbol  v_h, \boldsymbol  v_h)+\gamma^{-1}(\div \boldsymbol u_h, \div \boldsymbol v_h)
+\gamma^{-1}(\div \boldsymbol v_h,\div \boldsymbol v_h)+\delta \alpha_p  (p_h, p_h) + \alpha_p(p_h, \gamma^{-1}\div \boldsymbol v_h)\\
&\geq&\delta a_h(\bm{u}_h,\boldsymbol  u_h)-\frac{1}{2}\frac{1}{\sqrt \lambda}\epsilon_1^{-1}a_h(\bm{u}_h,\boldsymbol  u_h)
-\frac{1}{2}\frac{1}{\sqrt \lambda}\epsilon_1a_h(\bm{u}_{h,0},\boldsymbol  u_{h,0}) +\delta \lambda ( \div \boldsymbol  u_h, \div \boldsymbol  u_h)\\
&&-\frac{1}{2}\epsilon_2^{-1}\lambda(\div \boldsymbol  u_h, \div \boldsymbol  u_h)-\frac{1}{2}\epsilon_2\frac{1}{\lambda}(p_h, p_h)
+\frac{1}{\lambda}(p_h, p_h)+\delta(R_p^{-1}\boldsymbol  v_h, \boldsymbol  v_h)\\
&&-\frac{1}{2}\epsilon_3^{-1}\gamma^{-1}(\div \boldsymbol u_h, \div \boldsymbol u_h)
-\frac{1}{2}\epsilon_3\gamma^{-1}(\div \boldsymbol v_h, \div \boldsymbol v_h)+\gamma^{-1}(\div \boldsymbol v_h,\div \boldsymbol v_h)  \\
&&+\delta \alpha_p  (p_h, p_h) -\frac{1}{2}\epsilon_4\gamma^{-1}(\div \boldsymbol v_h, \div \boldsymbol v_h)
-\frac{1}{2}\epsilon_4^{-1}\alpha_p^2\gamma^{-1}(p_h, p_h)\\
&\geq&(\delta-\frac{1}{2}\frac{1}{\sqrt \lambda}\epsilon_1^{-1})\alpha_a\|\bm{u}_h\|_{DG}^2
+(\delta -\frac{1}{2}\epsilon_2^{-1})\lambda ( \div \boldsymbol  u_h, \div \boldsymbol  u_h)
-\frac{1}{2}\epsilon_3^{-1}\gamma^{-1}(\div \boldsymbol u_h, \div \boldsymbol u_h)\\
&&+\delta(R_p^{-1}\boldsymbol  v_h, \boldsymbol  v_h)
+(1-\frac{1}{2}\epsilon_3-\frac{1}{2}\epsilon_4)\gamma^{-1}(\div \boldsymbol v_h,\div \boldsymbol v_h)
\qquad \qquad \qquad \qquad \qquad \,
(\hbox{by}~\eqref{coercivity:a_h}~~\hbox{and}~~\eqref{Korninequality})\\
&&+(1-\frac{1}{2}\frac{1}{\sqrt \lambda}\epsilon_1C^2_1C_0^2\beta_{sd}^{-2}
-\frac{1}{2}\epsilon_2)\frac{1}{\lambda}(p_h, p_h)+(\delta -\frac{1}{2}\epsilon_4^{-1}\alpha_p\gamma^{-1})\alpha_p  (p_h, p_h).
\qquad
(\hbox{by}~\eqref{eq:82},\eqref{continuity:a_h}~~\hbox{and}~~\eqref{Korninequality})
\end{eqnarray*}
}
{
Now letting $\epsilon_1=\frac{1}{2}C^{-2}_1C_0^{-2}\beta_{sd}^2, \epsilon_2=\epsilon_3=\epsilon_4=\frac{1}{2}$, and noting that $\rho^{-1}\leq \gamma, \lambda\geq \gamma^{-1}>0$, we obtain
}
{
\begin{eqnarray*}
A((\boldsymbol u_h,  \boldsymbol v_h, p_h), (\boldsymbol w_h,  \boldsymbol z_h, p_h))
&\geq&(\delta-\frac{1}{\sqrt \lambda} C^2_1C^2_0\beta_{sd}^{-2})\alpha_a\|\bm{u}_h\|_{DG}^2+(\delta -2)\lambda ( \div \boldsymbol  u_h, \div \boldsymbol  u_h)+\delta(R_p^{-1}\boldsymbol  v_h, \boldsymbol  v_h)\\
&&+\frac{1}{2}\gamma^{-1}(\div \boldsymbol v_h,\div \boldsymbol v_h)+\frac{1}{2}\frac{1}{\lambda}(p_h, p_h)+(\delta -\alpha_p\gamma^{-1})\alpha_p  (p_h, p_h)
\end{eqnarray*}
Next, setting $\delta:=\max\{C^2_1C^2_0\beta_{sd}^{-2}+\frac{1}{2}\alpha^{-1}_a, 2+\frac{1}{2}\}$ and
noting that $\alpha_p\leq \gamma, \lambda \geq 1$,} we derive the coervicity estimate
\begin{eqnarray*}
A((\boldsymbol u_h,  \boldsymbol v_h, p_h), (\boldsymbol w_h,  \boldsymbol z_h, p_h))
&\geq&\frac{1}{2}\|\bm{u}_h\|_{DG}^2 +\frac{1}{2}\lambda ( \div \boldsymbol  u_h, \div \boldsymbol  u_h)+\frac{1}{2}(R_p^{-1}\boldsymbol  v_h, \boldsymbol  v_h)\\
&&+\frac{1}{2}\gamma^{-1}(\div \boldsymbol v_h,\div \boldsymbol v_h)+\frac{1}{2}\frac{1}{\lambda}(p_h, p_h)+\frac{1}{2}\alpha_p  (p_h, p_h)\\
&\geq&\frac{1}{2}\big(\|\boldsymbol u_h\|^2_{\boldsymbol U_h}+\|\boldsymbol  v_h\|^2_{\boldsymbol V}+\|p_h\|^2_P\big).
\end{eqnarray*}
Case \uppercase\expandafter{\romannumeral 2}:
\begin{equation}\label{eq:89}
\rho=\min\{\lambda,R^ {-1}_p\}=R^ {-1}_p, ~\hbox{hence} ~\lambda\geq R_p^{-1}, ~\gamma^{-1}\leq \rho=R^ {-1}_p.
\end{equation}
For any $(\boldsymbol u_h,  \boldsymbol v_h, p_h)\in \boldsymbol U_h\times\boldsymbol V_h\times P_h$, by the second inequality in \eqref{inf-sup}, there exists 
\begin{equation}\label{eq:90}
\boldsymbol v_{h,0}\in \boldsymbol V_h,~\hbox{such that}~\div \boldsymbol v_{h,0}={\sqrt {R_p}} p_h,~ \|\boldsymbol v_{h,0}\|_{\div}\leq \beta_{dd}^{-1}{\sqrt {R_p}} \|p_h\|.
\end{equation}
Choose 
\begin{equation}\label{eq:91}
\boldsymbol w_h=\delta \boldsymbol u_h, \boldsymbol z_h=\delta \boldsymbol v_h-\sqrt {R_p}\boldsymbol v_{h,0}, q_h= -\delta p_h-\gamma^{-1}\div \boldsymbol v_h,
\end{equation}
where $\delta$ is a constant which will be specified later.

Again we first verify the boundedness of
$(\boldsymbol w_h,  \boldsymbol z_h, q_h)$ by $(\boldsymbol u_h,  \boldsymbol v_h, p_h)$.
We note that $\|\boldsymbol w_h\|_{\boldsymbol U_h}=\delta \|\boldsymbol u_h\|_{\boldsymbol U_h}$.

From \eqref{eq:90} and noting that $\gamma^{-1}\leq \rho=R_p^{-1}$ we have 
\begin{equation}\label{eq:92}
\begin{split}
({\sqrt R_p} \boldsymbol v_{h,0}, {\sqrt R_p} \boldsymbol v_{h,0})_{\boldsymbol V}
&=(R_p^{-1} {\sqrt R_p}\boldsymbol v_{h,0}), {\sqrt R_p}\boldsymbol v_{h,0})
+\gamma^{-1}(\div\sqrt {R_p} \boldsymbol v_{h,0}, \div\sqrt {R_p} \boldsymbol v_{h,0})\\
&\leq(\boldsymbol v_{h,0},  \boldsymbol v_{h,0})+(\div \boldsymbol v_{h,0}, \div \boldsymbol v_{h,0}) \\
&\leq \beta^{-2}_{dd} R_p\|p_h\|^2\leq \beta^{-2}_{dd} \gamma\|p_h\|^2\leq\beta^{-2}_{dd}(p_h,p_h)_P.
\end{split}
\end{equation}
Hence, we get the boundedness of $\boldsymbol z_h$, that is
\begin{equation}\label{eq:93}
\|\boldsymbol z_h\|_{\boldsymbol V}\leq \delta \|\boldsymbol v_h\|_{\boldsymbol V}+\beta^{-1}_{dd} \|p_h\|_P.
\end{equation}
Again we have the boundedness for $q_h$ {according to} \eqref{eq:87}.

In what follows we show the coercivity of
$A_h((\boldsymbol u_h,  \boldsymbol v_h, p_h), (\boldsymbol w_h, \boldsymbol z_h, q_h))$
in Case \uppercase\expandafter{\romannumeral 2}.

\noindent
Using the definition of $(\boldsymbol w_h,  \boldsymbol z_h, q_h)$ and \eqref{eq:90}, we find
\begin{eqnarray*}
&&A_h((\boldsymbol u_h,  \boldsymbol v_h, p_h), (\boldsymbol w_h, \boldsymbol z_h, q_h))\\
&=&a_h(\bm u_h, \bm w_h) +\lambda ( \div \boldsymbol  u_h, \div \boldsymbol  w_h)- ( p_h, \div \boldsymbol  w_h)+
  (R_p^{-1}\boldsymbol  v_h, \boldsymbol  z_h)- (p_h, \div \boldsymbol  z_h)\\
&&-(\div \boldsymbol u_h,q_h)  -(\div \boldsymbol v_h,q_h)  -\alpha_p  (p_h,q_h)\\
&=&a_h(\bm u_h, \delta\bm u_h) +\lambda ( \div \boldsymbol  u_h, \delta\div \boldsymbol  u_h )
-( p_h, \delta\div \boldsymbol  u_h)+(R_p^{-1}\boldsymbol  v_h, \delta\boldsymbol  v_h-\sqrt{R_p} \boldsymbol v_{h,0})\\
&&-(p_h,  \delta\div \boldsymbol  v_h-\sqrt{R_p} \div\boldsymbol v_{h,0})-(\div \boldsymbol u_h, -\delta p_h-\gamma^{-1}\div \boldsymbol v_h)\\
&&-(\div \boldsymbol v_h,-\delta p_h-\gamma^{-1}\div \boldsymbol v_h)  -\alpha_p  (p_h,-\delta p_h-\gamma^{-1}\div \boldsymbol v_h)\\
&=&\delta a_h(\bm u_h, \bm u_h) +\delta \lambda ( \div \boldsymbol  u_h, \div \boldsymbol  u_h)-\delta(p_h,\div \boldsymbol  u_h)
+\delta(R_p^{-1}\boldsymbol  v_h, \boldsymbol  v_h)-({R^{-1/2}_p}\boldsymbol  v_h, \boldsymbol v_{h,0})\\
&&-\delta(p_h,  \div \boldsymbol  v_h)+(p_h, \sqrt{R_p} \div\boldsymbol v_{h,0})+\delta(\div \boldsymbol u_h, p_h)
+\gamma^{-1}(\div \boldsymbol u_h, \div \boldsymbol v_h)\\
&&+\delta(\div \boldsymbol v_h, p_h)+\gamma^{-1}(\div \boldsymbol v_h,\div \boldsymbol v_h)+\delta \alpha_p  (p_h, p_h)
+ \alpha_p(p_h, \gamma^{-1}\div \boldsymbol v_h)\\
&=&\delta a_h(\bm u_h, \bm u_h) +\delta \lambda ( \div \boldsymbol  u_h, \div \boldsymbol  u_h)
+\delta(R_p^{-1}\boldsymbol  v_h, \boldsymbol  v_h)-({R^{-1/2}_p}\boldsymbol  v_h, \boldsymbol v_{h,0})+(p_h, R_pp_h)
\qquad \qquad \quad \ \ \, \mbox{by } \eqref{eq:90}\\
&&+\gamma^{-1}(\div \boldsymbol u_h, \div \boldsymbol v_h)+\gamma^{-1}(\div \boldsymbol v_h,\div \boldsymbol v_h)  +\delta \alpha_p  (p_h, p_h) + \alpha_p(p_h, \gamma^{-1}\div \boldsymbol v_h).
\end{eqnarray*}
{Next, we apply Cauchy's inequality, use~\eqref{eq:90}, and the coercivity of $a_h(\cdot,\cdot)$, to get
\begin{eqnarray*}
&&A_h((\boldsymbol u_h,  \boldsymbol v_h, p_h), (\boldsymbol w_h, \boldsymbol z_h, q_h))\\
&=&\delta a_h(\bm u_h, \bm u_h) +\delta \lambda ( \div \boldsymbol  u_h, \div \boldsymbol  u_h)+\delta(R_p^{-1}\boldsymbol  v_h, \boldsymbol  v_h)
-({R^{-1/2}_p}\boldsymbol  v_h, \boldsymbol v_{h,0})+(p_h, R_pp_h)\\
&&+\gamma^{-1}(\div \boldsymbol u_h, \div \boldsymbol v_h)+\gamma^{-1}(\div \boldsymbol v_h,\div \boldsymbol v_h)
+\delta \alpha_p  (p_h, p_h) + \alpha_p(p_h, \gamma^{-1}\div \boldsymbol v_h)\\
&\geq&\delta a_h(\bm u_h, \bm u_h)+\delta \lambda ( \div \boldsymbol  u_h, \div \boldsymbol  u_h)+\delta(R_p^{-1}\boldsymbol  v_h, \boldsymbol  v_h)
-\frac{1}{2}\epsilon_1^{-1}({R^{-1}_p}\boldsymbol  v_h, \boldsymbol v_h)-\frac{1}{2}\epsilon_1(\boldsymbol  v_{h,0}, \boldsymbol v_{h,0})\\
&&+({R_p} p_h, p_h)-\frac{1}{2}\epsilon_2^{-1}\gamma^{-1}(\div \boldsymbol u_h, \div \boldsymbol u_h)
-\frac{1}{2}\epsilon_2\gamma^{-1}(\div \boldsymbol v_h, \div \boldsymbol v_h)\\
&&+\gamma^{-1}(\div \boldsymbol v_h,\div \boldsymbol v_h)  +\delta \alpha_p  (p_h, p_h)
-\frac{1}{2}\epsilon_3\gamma^{-1}(\div \boldsymbol v_h, \div \boldsymbol v_h) -\frac{1}{2}\epsilon_
3^{-1}\alpha_p^2\gamma^{-1}(p_h, p_h)\\
&\geq&\delta \alpha_a\|\bm u_h\|_{DG}^2 +\delta \lambda ( \div \boldsymbol  u_h, \div \boldsymbol  u_h)
-\frac{1}{2}\epsilon_2^{-1}\gamma^{-1}(\div \boldsymbol u_h, \div \boldsymbol u_h)
+(\delta-\frac{1}{2}\epsilon_1^{-1})({R^{-1}_p}\boldsymbol  v_h, \boldsymbol v_h)
\quad \mbox{by } \eqref{coercivity:a_h} \mbox{, } \eqref{eq:90} \\
&&+(1-\frac{1}{2}\epsilon_2-\frac{1}{2}\epsilon_3)\gamma^{-1}(\div \boldsymbol v_h, \div \boldsymbol v_h)
+(1-\frac{1}{2}\epsilon_1\beta^{-2}_{dd})(R_p p_h, p_h)+(\delta -\frac{1}{2}\epsilon_3^{-1}\alpha_p\gamma^{-1})\alpha_p  (p_h, p_h).
%~~\,\,\,\,\hbox{by}\,\, \eqref{eq:90}\\
\end{eqnarray*}
}
Now, letting $\epsilon_1=\beta_{dd}^2, \epsilon_2=\epsilon_3=\frac{1}{2}$, and noting that $\rho^{-1}\leq \gamma, \lambda\geq \rho\geq \gamma^{-1}>0$, we obtain
\begin{eqnarray*}
&&A_h((\boldsymbol u_h,  \boldsymbol v_h, p_h), (\boldsymbol w_h, \boldsymbol z_h, q_h))\\
&\geq&\delta\alpha_a\|\bm u_h\|_{DG}^2 +(\delta-1)\lambda (\div \boldsymbol  u_h, \div \boldsymbol  u_h)+(\delta-\frac{1}{2}\beta_d^{-2})(R_p^{-1}\boldsymbol  v_h, \boldsymbol  v_h)\\
&&+\frac{1}{2}\gamma^{-1}(\div \boldsymbol v_h,\div \boldsymbol v_h)+\frac{1}{2}(R_pp_h, p_h)+(\delta -\alpha_p\gamma^{-1})\alpha_p  (p_h, p_h)
\end{eqnarray*}
Finally, we choose $\delta:=\max\{\frac{1}{2}\alpha_a^{-1}, \frac{1}{2}\beta_{dd}^{-2}+\frac{1}{2}, 1+\frac{1}{2}\}$,
note that $\alpha_p\leq \gamma$, and conclude the coervicity of the bilinear form, i.e.,
\begin{eqnarray*}%\label{eq:91}
A_h((\boldsymbol u_h,  \boldsymbol v_h, p_h), (\boldsymbol w_h, \boldsymbol z_h, q_h))
&\geq&\frac{1}{2}\|\bm u_h\|_{DG}^2 +\frac{1}{2}\lambda (\div \boldsymbol  u_h, \div \boldsymbol  u_h)+\frac{1}{2}(R_p^{-1}\boldsymbol  v_h, \boldsymbol  v_h)\\
&&+\frac{1}{2}\gamma^{-1}(\div \boldsymbol v_h,\div \boldsymbol v_h)+\frac{1}{2}(R_pp_h, p_h)+\frac{1}{2}\alpha_p  (p_h, p_h)\\
&\geq&\frac{1}{2}\big(\|\boldsymbol u_h\|^2_{\boldsymbol U_h}+\|\boldsymbol  v_h\|^2_{\boldsymbol V}+\|p_h\|^2_P\big).
\end{eqnarray*}
This completes the proof.
\end{proof}
From the above theorem, we {get the following stability estimate.}
\begin{corollary}\label{eq:92}
Let $(\boldsymbol u_h,  \boldsymbol v_h, p_h)\in \boldsymbol U_h\times \boldsymbol V_h\times P_h$ be the solution of \eqref{eq:75}-\eqref{eq:77},
then we have the estimate
\begin{equation}
\|\boldsymbol u_h\|_{\boldsymbol U_h}+\|\boldsymbol  v_h\|_{\boldsymbol V}+\|p_h\|_P\leq C_2 (\|\boldsymbol f\|_{\boldsymbol U_h^*}+\|g\|_{P^*}),
\end{equation} 
where $\|\boldsymbol f\|_{\boldsymbol U_h^*}=
\sup\limits_{\boldsymbol w_h\in \boldsymbol U_h}\frac{(\boldsymbol f, \boldsymbol w_h)}{\|\boldsymbol w_h\|_{\boldsymbol U_h}},\|g\|_{P^*}=\sup\limits_{q_h\in P_h}\frac{(g,q_h)}{\|q_h\|_P}$
and $C_2$ is a constant independent of $\lambda, R_p^{-1}, \alpha_p$ and mesh size $h$.
\end{corollary}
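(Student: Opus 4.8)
The plan is to follow the same strategy as in the proof of Theorem~\ref{Continuity:Stability}, replacing the symmetric form $(\ep(\cdot),\ep(\cdot))$ by the DG form $a_h(\cdot,\cdot)$ and the continuous inf-sup conditions of Lemmas~\ref{Stokes:inf-sup} and~\ref{Hdiv:inf-sup} by their discrete counterparts in~\eqref{inf-sup}. Since $A_h$ is bounded by construction, it suffices, by the Babu\v{s}ka--Brezzi theory, to exhibit for each $(\boldsymbol u_h,\boldsymbol v_h,p_h)$ a test triple $(\boldsymbol w_h,\boldsymbol z_h,q_h)$ whose norm is controlled by that of $(\boldsymbol u_h,\boldsymbol v_h,p_h)$ and for which $A_h((\boldsymbol u_h,\boldsymbol v_h,p_h),(\boldsymbol w_h,\boldsymbol z_h,q_h))$ dominates $\|\boldsymbol u_h\|_{\boldsymbol U_h}^2+\|\boldsymbol v_h\|_{\boldsymbol V}^2+\|p_h\|_P^2$, with all constants independent of $\lambda, R_p^{-1}, \alpha_p, h$.

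I would split into Case~I, $\rho=\lambda\le R_p^{-1}$, and Case~II, $\rho=R_p^{-1}\le\lambda$. In Case~I, using the first inequality in~\eqref{inf-sup}, pick $\boldsymbol u_{h,0}\in\boldsymbol U_h$ with $\div\boldsymbol u_{h,0}=\lambda^{-1/2}p_h$ and $\|\boldsymbol u_{h,0}\|_{1,h}\le\beta_{sd}^{-1}\lambda^{-1/2}\|p_h\|$, and set $\boldsymbol w_h=\delta\boldsymbol u_h-\lambda^{-1/2}\boldsymbol u_{h,0}$, $\boldsymbol z_h=\delta\boldsymbol v_h$, $q_h=-\delta p_h-\gamma^{-1}\div\boldsymbol v_h$, mimicking~\eqref{eq:wzq}. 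The boundedness $\|\boldsymbol w_h\|_{\boldsymbol U_h}+\|\boldsymbol z_h\|_{\boldsymbol V}+\|q_h\|_P\lesssim\|\boldsymbol u_h\|_{\boldsymbol U_h}+\|\boldsymbol v_h\|_{\boldsymbol V}+\|p_h\|_P$ then follows from the norm equivalence~\eqref{Korninequality}, the bound on $\boldsymbol u_{h,0}$, and the relations $\gamma^{-1}\le\rho=\lambda$ and $\lambda\ge1$, exactly as in~\eqref{eq:46}--\eqref{eq:49}. In Case~II the roles of the two inf-sup conditions switch: use the second inequality in~\eqref{inf-sup} to pick $\boldsymbol v_{h,0}\in\boldsymbol V_h$ with $\div\boldsymbol v_{h,0}=\sqrt{R_p}\,p_h$ and $\|\boldsymbol v_{h,0}\|_{\div}\le\beta_{dd}^{-1}\sqrt{R_p}\,\|p_h\|$, set $\boldsymbol w_h=\delta\boldsymbol u_h$, $\boldsymbol z_h=\delta\boldsymbol v_h-\sqrt{R_p}\,\boldsymbol v_{h,0}$, $q_h=-\delta p_h-\gamma^{-1}\div\boldsymbol v_h$, and bound as in~\eqref{eq:64}--\eqref{eq:65}.

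For coercivity, I would expand $A_h((\boldsymbol u_h,\boldsymbol v_h,p_h),(\boldsymbol w_h,\boldsymbol z_h,q_h))$; as in the continuous computation the pairings $(p_h,\div\boldsymbol u_h)$ and $(\div\boldsymbol v_h,p_h)$ cancel, leaving, in Case~I, $\delta a_h(\boldsymbol u_h,\boldsymbol u_h)-\lambda^{-1/2}a_h(\boldsymbol u_h,\boldsymbol u_{h,0})+\delta\lambda\|\div\boldsymbol u_h\|^2-(\div\boldsymbol u_h,p_h)+\lambda^{-1}\|p_h\|^2+\delta(R_p^{-1}\boldsymbol v_h,\boldsymbol v_h)+\gamma^{-1}(\div\boldsymbol u_h,\div\boldsymbol v_h)+\gamma^{-1}\|\div\boldsymbol v_h\|^2+\delta\alpha_p\|p_h\|^2+\alpha_p\gamma^{-1}(p_h,\div\boldsymbol v_h)$, with the analogous expression in Case~II. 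I then estimate the mixed terms by weighted Cauchy--Schwarz with parameters $\epsilon_1,\dots,\epsilon_4$; the decisive step is to control $\lambda^{-1/2}a_h(\boldsymbol u_h,\boldsymbol u_{h,0})$, for which I invoke the continuity~\eqref{continuity:a_h} of $a_h$ in $\|\cdot\|_{DG}$, the equivalence~\eqref{Korninequality}, and the bound on $\|\boldsymbol u_{h,0}\|_{1,h}$, while the diagonal contribution $a_h(\boldsymbol u_h,\boldsymbol u_h)$ is kept coercive via~\eqref{coercivity:a_h}. Choosing $\epsilon_1$ proportional to $\beta_{sd}^2$ (resp.\ $\beta_{dd}^2$), $\epsilon_2=\epsilon_3=\epsilon_4=\tfrac12$, and then $\delta$ a fixed multiple of $\max\{\beta_{sd}^{-2},\alpha_a^{-1},1\}$ (resp.\ $\max\{\beta_{dd}^{-2},\alpha_a^{-1},1\}$), and using $\rho^{-1}\le\gamma$, $\lambda\ge\gamma^{-1}>0$, $\lambda\ge1$ and $\alpha_p\le\gamma$, I obtain $A_h(\cdot,\cdot)\gtrsim\|\boldsymbol u_h\|_{DG}^2+\lambda\|\div\boldsymbol u_h\|^2+(R_p^{-1}\boldsymbol v_h,\boldsymbol v_h)+\gamma^{-1}\|\div\boldsymbol v_h\|^2+\gamma\|p_h\|^2\eqsim\|\boldsymbol u_h\|_{\boldsymbol U_h}^2+\|\boldsymbol v_h\|_{\boldsymbol V}^2+\|p_h\|_P^2$. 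Combining the two cases gives~\eqref{eq:80}.

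The main obstacle relative to the continuous proof is the treatment of the nonsymmetric, mesh-dependent form $a_h$: whereas in Theorem~\ref{Continuity:Stability} the cross term $(\ep(\boldsymbol u),\ep(\boldsymbol u_0))$ is handled by a single Cauchy--Schwarz in the energy inner product, here $\lambda^{-1/2}a_h(\boldsymbol u_h,\boldsymbol u_{h,0})$ must be routed through the continuity bound~\eqref{continuity:a_h}, the Korn-type equivalence~\eqref{Korninequality}, and the discrete Stokes constant $\beta_{sd}$, all while the $a_h(\boldsymbol u_h,\boldsymbol u_h)$ part is kept coercive by~\eqref{coercivity:a_h}; consequently the admissible range of $\delta$ now also depends on $\alpha_a$, and one must verify that this does not spoil $h$-robustness. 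Everything else is a routine transcription of the continuous argument with $(\boldsymbol u,\boldsymbol v,p)$ replaced by $(\boldsymbol u_h,\boldsymbol v_h,p_h)$ and the continuous inf-sup constants by $\beta_{sd},\beta_{dd}$.
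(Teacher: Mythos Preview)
Your proposal is correct and follows essentially the same approach as the paper: in the paper the corollary is stated as an immediate consequence of Theorem~\ref{Dis:Stability} (no separate proof is given), and what you outline is precisely the paper's proof of Theorem~\ref{Dis:Stability}, with the same case split, the same test triples~\eqref{eq:83} and~\eqref{eq:91}, the same use of~\eqref{continuity:a_h},~\eqref{Korninequality},~\eqref{coercivity:a_h}, and the same choices of $\epsilon_i$ and $\delta$. The only difference is organizational: the paper isolates the discrete inf-sup condition~\eqref{eq:80} as a separate theorem and then derives the corollary in one line, whereas you fold both steps together.
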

%{From Theorem \ref{Dis:Stability}, a parameter-robust block preconditioner can be designed correspondingly. That will be our ongoing work on the way.}
{\begin{remark}
Denote by $A_h$ the operator induced by the bilinear form \eqref{eq:79}, namely 
\begin{equation}\label{operator:A_h}
A_h:=\left[\begin{array}{ccc}
-\div_h \ep_h-\lambda \nabla_h \div_h & 0 & \nabla_h \\
0 & R_p^{-1}I_h & \nabla_h \\
-\div_h & - \div_h & -\alpha_{p} I_h  \end{array}\right],
\end{equation} and
define 
\begin{equation}\label{Preconditioner:Bh}
B_h:=\left[\begin{array}{ccc}
(-\div_h \ep_h-\lambda \nabla_h \div_h)^{-1} & 0 & 0 \\
0 & (R_p^{-1}I_h+ \gamma^{-1}\nabla_h \div_h)^{-1}& 0 \\
0& 0& (\gamma I_h )^{-1}  \end{array}\right].
\end{equation}
Then due to the theory presented in \cite{Mardal2011preconditioning}, Theorem~\ref{Dis:Stability} implies
that the norm-equivalent (canonical) block-diagonal preconditioner $B_h$ for $A_h$ is parameter-robust,
which means {that} the condition number $\kappa(B_hA_h)$ is uniformly bounded with respect to the
parameters $\lambda, R_p^{-1}, \alpha_p$ in the ranges~\eqref{parameter:range} and with respect to the
mesh size $h$.
\end{remark}
}

%\begin{remark}
%In practice, relevant parameters in the soft tissue of the central
%nervous system are Young’s modulus of  $1 − 60$ kPa, Poisson ratio from $0.3$ to
%almost $0.5$, and the permeability is  $10^{-14}-10^{-16} m^2$. In
%geophysics, Young’s modulus is typically in the order of GPa, Poisson ratio $0.1-0.3$,
%while the permeability may vary from approximately $10^{-9} $ to $10^{-21}$. Relations of Young’s modulus $E$, Poisson ratio $\nu$ and the two elastic moduli $\mu$, are $\mu=E/2(1+\nu)$ and $\lambda=E\nu/(1+\nu)(1-2\nu)$. Consequently, $\mu$ and $\lambda$
%in the ranges of $300-500$ MPa and $100-500$ MPa, respectively, in geoscience
%applications, whereas corresponding numbers are in the ranges $300-2000
%$ Pa and $500-10^6$ Pa in neurological applications. In discrete case, always the time step size $\tau$ is very small. And hence, in our notation, $\lambda$ can be comparable to $R_p^{-1}$, or can be much smaller than $R_p^{-1}$. Further, $\alpha_p$ can be very small or comparable to 
%$\frac{1}{\lambda}$.
%\end{remark}

\section{Error estimates}\label{sec:error_estimates}
In this section, we derive the error estimates {that follow from the results presented} in Section~\ref{sec:uni_stab_disc_model}.
{Let    
$\Pi_B^{\div}: H^1(\Omega)^d\mapsto \bm U_h$ be the canonical interpolation operator. We also denote the
$L^2$-projection on $P_h$ by~$Q_h$. The following Lemma, see \cite{hong2016robust}, summarizes some of the properties of
$\Pi_B^{ \div}$ and $Q_h$ needed for our proof.
\begin{lemma}\label{stable:Interpolation}
For all $\bm w \in H^1(K)^d$ we have
\begin{eqnarray*}
 \div \Pi_B^{ \div}=Q_h  \div\;;\quad
|\Pi_B^{ \div} \bm w|_{1,K} \lesssim |\bm w|_{1,K};~  \|\bm w-\Pi_B^{ \div} \bm w\|^2_{0,\partial K}\lesssim h_K
  |\bm w|^2_{1,K}.
  %\quad 
  %\| \div(\bm w-\Pi_h^{ \div} \bm w)\|_{-1}\lesssim h_K
  %\| \div\bm w\|,
\end{eqnarray*}
%where $\|r\|_{-1} = \sup_{\chi\in H^1}\frac{(\chi,r)}{\|\chi\|_1}$. 
\end{lemma}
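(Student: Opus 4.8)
The plan is to verify the three assertions in turn, using the standard construction of the canonical $H(\div)$-interpolation operator through its degrees of freedom together with a Piola-transform scaling argument; the complete bookkeeping is as in \cite{hong2016robust}.

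For the commuting relation $\div\,\Pi_B^{\div}=Q_h\,\div$, I would fix $K\in\mathcal{T}_h$ and $q\in Q(K)=P_{l-1}(K)$ and integrate by parts,
\[
\int_K \div(\Pi_B^{\div}\bm w)\,q\,dx \;=\; -\int_K \Pi_B^{\div}\bm w\cdot\nabla q\,dx \;+\; \int_{\partial K}(\Pi_B^{\div}\bm w\cdot\bm n_K)\,q\,ds .
\]
By the defining degrees of freedom of $\Pi_B^{\div}$ --- the normal moments against traces of $P_{l-1}$ on $\partial K$ and the interior moments, which in particular reproduce the testing against $\nabla P_{l-1}$ --- both terms on the right-hand side are unchanged when $\Pi_B^{\div}\bm w$ is replaced by $\bm w$. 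Integrating by parts back gives $\int_K\div(\Pi_B^{\div}\bm w)\,q\,dx=\int_K\div\bm w\,q\,dx$ for every $q\in P_{l-1}(K)$, and since $\div\bm U(K)=Q(K)$ this is exactly $\div\Pi_B^{\div}\bm w=Q_h(\div\bm w)$ on $K$.

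For the two local estimates I would pass to a fixed reference element $\hat K$ via the affine map $F_K$ and the contravariant Piola transform $\mathcal{P}_K$, under which $\Pi_B^{\div}$ commutes with its reference counterpart $\hat\Pi$, i.e. $\Pi_B^{\div}\bm w=\mathcal{P}_K(\hat\Pi\hat{\bm w})$ with $\hat{\bm w}:=\mathcal{P}_K^{-1}\bm w$. On $\hat K$ every degree of freedom of $\hat\Pi$ is a bounded linear functional on $H^1(\hat K)^d$ (the normal face moments being controlled through the trace theorem), so $\|\hat\Pi\hat{\bm w}\|_{1,\hat K}\lesssim\|\hat{\bm w}\|_{1,\hat K}$ on the finite-dimensional local space. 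Because $\hat\Pi$ leaves constant vector fields invariant, subtracting the mean of $\hat{\bm w}$ and invoking the Poincar\'e (Deny--Lions) inequality upgrades this to $|\hat\Pi\hat{\bm w}|_{1,\hat K}\lesssim|\hat{\bm w}|_{1,\hat K}$, and applying the trace theorem to $\hat{\bm w}-\hat\Pi\hat{\bm w}$ yields $\|\hat{\bm w}-\hat\Pi\hat{\bm w}\|_{0,\partial\hat K}\lesssim|\hat{\bm w}|_{1,\hat K}$. Mapping back with the scaling relations of the Piola transform on a shape-regular element --- which reproduce precisely the powers of $h_K$ in the statement --- then gives $|\Pi_B^{\div}\bm w|_{1,K}\lesssim|\bm w|_{1,K}$ and $\|\bm w-\Pi_B^{\div}\bm w\|_{0,\partial K}^2\lesssim h_K|\bm w|_{1,K}^2$.

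I expect the main technical point to be the careful tracking of the Piola-transform scalings: one must account for the powers of $h_K$ arising from the Jacobian of $F_K$, from $\mathcal{P}_K$ itself, and from the volume and surface changes of variables, and check that on a shape-regular mesh they combine to exactly the stated powers with constants independent of $K$. Everything else is the routine Bramble--Hilbert argument, and the full details can be found in \cite{hong2016robust}.
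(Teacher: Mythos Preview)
Your sketch is a correct outline of the standard argument---the commuting identity via integration by parts against the defining degrees of freedom, and the two local estimates via Piola-transform scaling together with a Bramble--Hilbert/Deny--Lions step on the reference element---and you have correctly identified the one place that requires care (tracking the $h_K$ powers under the contravariant Piola map on shape-regular meshes). The paper itself does not prove this lemma at all: it is stated with a direct citation to \cite{hong2016robust}, so your proposal is strictly more detailed than what appears here and is consistent with the reference both you and the paper invoke.
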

}
\begin{theorem}\label{error0}
Let $(\bm u,\bm v, p)$ be the solution of~\eqref{eq:8a}--\eqref{eq:8c} and $(\bm u_h,\bm v_h, p_h)$
be the solution of \eqref{eq:75}--\eqref{eq:77}. Then the error estimates
\begin{equation} \label{eq:erroruv}
\|\bm u-\bm u_h\|_{\bm U_h}+\|\bm v-\bm v_h\|_{\bm V}\leq C_{e,u}  \inf\limits_{\boldsymbol w_h\in \boldsymbol U_h, \bm z_h\in \bm V_h}\Big(\|\bm u-\bm w_h\|_{\bm U_h}+\|\bm v-\bm z_h\|_{\bm V}\Big),
\end{equation}
and 
\begin{equation} \label{eq:errorp}
\|p-p_h\|_P\leq C_{e,p}  \inf\limits_{\boldsymbol w_h\in \boldsymbol U_h, \bm z_h\in \bm V_h, q_h\in P_h}\Big(\|\bm u-\bm w_h\|_{\bm U_h}+\|\bm v-\bm z_h\|_{\bm V}+\|p-q_h\|_P\Big),
\end{equation}
hold, where $C_{e,u}, C_{e,p}$ are constants independent of $\lambda, R_p^{-1}, \alpha_p$ and the mesh size $h$.
\end{theorem}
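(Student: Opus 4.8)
The plan is to derive the two error estimates \eqref{eq:erroruv} and \eqref{eq:errorp} from the discrete parameter-robust inf-sup stability in Theorem~\ref{Dis:Stability}, via the standard Babu\v{s}ka--type argument for mixed formulations, taking care that the discretization is only \emph{nonconforming} in the displacement variable. First I would establish a \emph{consistency} (Galerkin orthogonality up to nonconformity) statement: subtracting the discrete system \eqref{eq:75}--\eqref{eq:77} from the weak form \eqref{eq:8a}--\eqref{eq:8c} tested against discrete functions, one obtains
\begin{equation*}
A_h\big((\bm u - \bm u_h,\, \bm v - \bm v_h,\, p - p_h),\, (\bm w_h,\, \bm z_h,\, q_h)\big) = \mathcal{E}_h\big(\bm u;\, \bm w_h\big) \qquad \text{for all } (\bm w_h, \bm z_h, q_h) \in \bm U_h \times \bm V_h \times P_h,
\end{equation*}
where $\mathcal{E}_h(\bm u;\bm w_h)$ collects the edge terms coming from $a_h(\cdot,\cdot)$ acting on the (smooth) exact solution. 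Since $\bm u \in H^1_0(\Omega)^d$ is single-valued, the averaging terms in \eqref{78} involving $\{\ep(\bm u)\}$ pair against $[\bm w_{h,t}]$, and one shows by the trace/approximation estimates (Lemma~\ref{stable:Interpolation}, the bound \eqref{eq:70}, and the equivalence \eqref{Korninequality}) that $|\mathcal{E}_h(\bm u;\bm w_h)| \lesssim \big(\inf_{\bm w_h'} \|\bm u - \bm w_h'\|_{DG}\big)\,\|\bm w_h\|_{DG} \le \big(\inf_{\bm w_h'} \|\bm u - \bm w_h'\|_{\bm U_h}\big)\,\|\bm w_h\|_{\bm U_h}$; indeed, for the commonly used symmetric interior penalty form this consistency term vanishes identically on $\bm U_h$ because $[\bm w_{h,t}]$ is orthogonal to polynomial averages of the right degree, so one may in fact take $\mathcal{E}_h \equiv 0$ and work with a genuine Galerkin orthogonality.

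Next I would use a triangle inequality: fix arbitrary $(\bm w_h, \bm z_h, q_h) \in \bm U_h \times \bm V_h \times P_h$ and write $\bm u - \bm u_h = (\bm u - \bm w_h) - (\bm u_h - \bm w_h)$, and similarly for $\bm v$ and $p$. The interpolation errors $\|\bm u - \bm w_h\|_{\bm U_h}$, $\|\bm v - \bm z_h\|_{\bm V}$, $\|p - q_h\|_P$ are exactly the quantities appearing on the right-hand sides of \eqref{eq:erroruv}--\eqref{eq:errorp}, so it remains to bound the discrete error $\big(\bm u_h - \bm w_h,\, \bm v_h - \bm z_h,\, p_h - q_h\big) \in \bm U_h \times \bm V_h \times P_h$ in the product norm. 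For this I invoke Theorem~\ref{Dis:Stability}: there is $(\bm w_h^*, \bm z_h^*, q_h^*)$ with unit product norm such that
\begin{equation*}
\beta_0 \big(\|\bm u_h - \bm w_h\|_{\bm U_h} + \|\bm v_h - \bm z_h\|_{\bm V} + \|p_h - q_h\|_P\big) \le A_h\big((\bm u_h - \bm w_h,\, \bm v_h - \bm z_h,\, p_h - q_h),\, (\bm w_h^*, \bm z_h^*, q_h^*)\big).
\end{equation*}
I then split the argument of $A_h$ using $\bm u_h - \bm w_h = (\bm u_h - \bm u) + (\bm u - \bm w_h)$, etc. The term $A_h\big((\bm u_h - \bm u, \ldots), (\bm w_h^*, \ldots)\big)$ equals $-\mathcal{E}_h(\bm u; \bm w_h^*)$ by consistency, hence is bounded by $C\,\inf_{\bm w_h'}\|\bm u - \bm w_h'\|_{\bm U_h}$; the remaining term $A_h\big((\bm u - \bm w_h, \bm v - \bm z_h, p - q_h), (\bm w_h^*, \bm z_h^*, q_h^*)\big)$ is bounded using the (parameter-robust) boundedness of $A_h$ — noted as ``obvious'' just before Theorem~\ref{Dis:Stability} — by $C\big(\|\bm u - \bm w_h\|_{\bm U_h} + \|\bm v - \bm z_h\|_{\bm V} + \|p - q_h\|_P\big)$. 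Taking the infimum over $(\bm w_h, \bm z_h, q_h)$ yields the \emph{combined} estimate
\begin{equation*}
\|\bm u - \bm u_h\|_{\bm U_h} + \|\bm v - \bm v_h\|_{\bm V} + \|p - p_h\|_P \le C \inf_{\bm w_h, \bm z_h, q_h}\big(\|\bm u - \bm w_h\|_{\bm U_h} + \|\bm v - \bm z_h\|_{\bm V} + \|p - q_h\|_P\big),
\end{equation*}
which immediately gives \eqref{eq:errorp}.

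To upgrade this to the sharper form \eqref{eq:erroruv}, where the right-hand side does not see the pressure-approximation error, I would exploit the block structure: the first two rows of $A_h$ couple $(\bm u_h, \bm v_h)$ to $p_h$ only through $-(p_h, \div\bm w_h) - (p_h, \div\bm z_h)$, and since $\div\bm U_h = \div\bm V_h = P_h$ exactly (the key property recorded after the definition of the spaces), one can test with $q_h$ ranging over $P_h$ and use the two inf-sup conditions in \eqref{inf-sup} to eliminate the pressure contribution. Concretely, one runs the Theorem~\ref{Dis:Stability} argument restricted so that the pressure test function is chosen as in the proof of that theorem (a combination of $-\delta p$ and $-\gamma^{-1}\div\bm v$), and observes that the pressure-interpolation term $\|p - q_h\|_P$ only enters through quantities that can be absorbed using $\div(\bm u - \Pi_B^{\div}\bm u) = (I - Q_h)\div\bm u$ and $\|(I - Q_h)p\|$-type bounds, which are themselves controlled by $\|\bm u - \bm w_h\|_{\bm U_h}$ and $\|\bm v - \bm z_h\|_{\bm V}$; alternatively, and more cleanly, one first proves \eqref{eq:erroruv} by a Cea-type estimate on the $(\bm u, \bm v)$-block (using coercivity of $a_h(\cdot,\cdot)$ from \eqref{coercivity:a_h} on the discrete kernel together with the commuting property $\div\Pi_B^{\div} = Q_h\div$ of Lemma~\ref{stable:Interpolation}), and then feeds the resulting bound on $\|\bm u - \bm u_h\|_{\bm U_h} + \|\bm v - \bm v_h\|_{\bm V}$ back into the pressure estimate. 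The main obstacle I anticipate is precisely the bookkeeping needed to get the asymmetric form \eqref{eq:erroruv}: one must verify that no $\|p - q_h\|_P$ term is needed on the right of the velocity/displacement estimate, which hinges on the exact divergence-surjectivity $\div\bm U_h = \div\bm V_h = P_h$ and the commuting-diagram property $\div\Pi_B^{\div} = Q_h\div$, so that the discrete kernel of the divergence constraint is approximated without pressure error; handling the nonconformity consistency term $\mathcal{E}_h$ uniformly in the parameters (it must scale like $\|\bm u - \bm w_h\|_{\bm U_h}$, \emph{not} pick up a spurious factor of $\lambda$ or $R_p^{-1}$) is the other point requiring care, and it works because the penalty parameter $\eta$ in \eqref{78} is parameter-independent and the troublesome $\lambda\|\div\bm u\|^2$ part of the $\bm U_h$-norm does not appear in the edge terms.
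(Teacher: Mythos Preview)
Your proposal is correct and reaches \eqref{eq:errorp} by the same mechanism as the paper (consistency of $a_h$, the discrete inf-sup of Theorem~\ref{Dis:Stability}, boundedness, triangle inequality). The route to the sharper estimate \eqref{eq:erroruv} differs, however. You first derive the combined bound carrying $\|p-q_h\|_P$ on the right and then sketch several ways to strip out the pressure term, listing the commuting property $\div\Pi_B^{\div}=Q_h\div$ as one alternative among others. The paper instead commits to the specific interpolants $\bm u_I=\Pi_B^{\div}\bm u$ and $p_I=Q_hp$ (with $\bm v_I\in\bm V_h$ arbitrary) \emph{from the start}: because $\div\bm U_h=\div\bm V_h=P_h$ and $\div\Pi_B^{\div}=Q_h\div$, the terms $(p-p_I,\div\bm w_h)$, $(p-p_I,\div\bm z_h)$, $\lambda(\div(\bm u-\bm u_I),\div\bm w_h)$, $(\div(\bm u-\bm u_I),q_h)$ and $\alpha_p(p-p_I,q_h)$ all vanish identically, so the right-hand sides of the error equations \eqref{eq:error4}--\eqref{eq:error6} depend only on $\bm u-\bm u_I$ and $\bm v-\bm v_I$. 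A single application of Theorem~\ref{Dis:Stability} then bounds \emph{all three} discrete errors by quantities free of any pressure approximation, and the $H^1$-stability of $\Pi_B^{\div}$ from Lemma~\ref{stable:Interpolation} converts $\|\bm u-\bm u_I\|_{\bm U_h}$ into the infimum over $\bm U_h$. This is considerably shorter than the separate kernel-coercivity detour you outline; conversely, your generic argument does not rely on a commuting interpolant, at the price of yielding only the combined bound directly. A minor point: your handling of the nonconformity residual $\mathcal{E}_h$ is unnecessary here since $a_h$ is consistent (as the paper simply notes), so $\mathcal{E}_h\equiv 0$; and consistency holds because $[\bm u_t]=0$ for the smooth exact solution and elementwise integration by parts reproduces the flux term, not because of any orthogonality of $[\bm w_{h,t}]$ to polynomial averages.
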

\begin{proof}
Subtracting \eqref{eq:75}--\eqref{eq:77} from \eqref{eq:8a}--\eqref{eq:8c} and noting the consistency of $ a_h(\cdot,\cdot)$, we have that for any $(\boldsymbol w_h, \boldsymbol z_h, q_h)\in \boldsymbol U_h\times\boldsymbol V_h\times P_h$
\begin{eqnarray}
 a_h(\bm u-\bm u_h,\bm w_h) +\lambda ( \div (\boldsymbol u- \boldsymbol u_h), \div \boldsymbol  w_h)- ( (p-p_h), \div \boldsymbol  w_h)&=&0,\label{eq:error1}\\
  (R_p^{-1}(\boldsymbol v- \boldsymbol v_h), \boldsymbol  z_h)- (p-p_h, \div \boldsymbol  z_h)&=& 0, \label{eq:error2} \\
-( \div (\boldsymbol u- \boldsymbol u_h),q_h)  -( \div (\boldsymbol v- \boldsymbol v_h),q_h)  -\alpha_p  (p-p_h,q_h)&=&0. \label{eq:error3}
\end{eqnarray}
%We only consider the space choice of $BDM_l/RT_{l-1}/P_{l-1}$, the other cases are similar. 

{Let $\boldsymbol u_I=\Pi_{B}^{div}\boldsymbol u\in \boldsymbol U_h, p_I=Q_h p\in P_h$.} 
%where $\Pi_{B}^{div}$
%is the canonical interpolation operator from $\boldsymbol U$ to $\boldsymbol U_h$,  and $Q_h$ is the $L^2$ orthogonal
%projection from $L^2(\Omega)$ to $P_h$. 
Now for arbitrary $\boldsymbol v_I\in \bm V_h$,
%$\boldsymbol v_I=\pi_{R}^{div} \boldsymbol v \in \boldsymbol V_h$. $\pi_{R}^{div}$ is the projection operator introduced by Winther from $\boldsymbol V$ to $\boldsymbol V_h$
from~\eqref{eq:error1}--\eqref{eq:error3}, noting that $\div\Pi_{B}^{div}=Q_h\div$
and $\div \boldsymbol U_h=\div \boldsymbol V_h=P_h$, we conclude
\begin{eqnarray}
 a_h(\bm u_I-\bm u_h,\bm w_h) +\lambda ( \div (\boldsymbol u_I- \boldsymbol u_h), \div \boldsymbol  w_h)- ( (p_I-p_h), \div \boldsymbol  w_h)&=&a_h(\bm u_I-\bm u,\bm w_h),\label{eq:error4}\\
  (R_p^{-1}(\boldsymbol v_I- \boldsymbol v_h), \boldsymbol  z_h)- (p_I-p_h, \div \boldsymbol  z_h)&=& (R_p^{-1}(\boldsymbol v_I- \boldsymbol v), \boldsymbol  z_h), \label{eq:error5} \\
-( \div (\boldsymbol u_I- \boldsymbol u_h),q_h)  -( \div (\boldsymbol v_I- \boldsymbol v_h),q_h)  -\alpha_p  (p_I-p_h,q_h)&=&-(\div (\boldsymbol v_I- \boldsymbol v),q_h). \label{eq:error6}
\end{eqnarray}
{Next, since} $(\bm u_I-\bm u_h)\in \bm U_h, (\bm v_I-\bm v_h)\in V_h, (p_I-p_h)\in P_h$,
by the stability result~\eqref{eq:80} for the discrete problem~\eqref{eq:75}--\eqref{eq:77}, we obtain
\begin{equation*}%\label{eq:error7}
\|\bm u_I-\bm u_h\|_{\bm U_h}+\|\bm v_I-\bm v_h\|_{\bm V}\leq C_e \Big( \sup\limits_{\boldsymbol w_h\in \boldsymbol U_h}\frac{a_h(\bm u_I-\bm u,\bm w_h)}{\|\bm w_h\|_{\bm U_h}}+\sup\limits_{\boldsymbol z_h\in \boldsymbol V_h}\frac{(R_p^{-1}(\boldsymbol v_I- \boldsymbol v), \boldsymbol  z_h)}{\|\bm z_h\|_V}+\sup\limits_{q_h\in P_h}\frac{(\div (\boldsymbol v- \boldsymbol v_I),q_h)}{\|q_h\|_P}\Big),
\end{equation*}
and 
\begin{equation*}%\label{eq:error8}
\|p_I-p_h\|_P\leq C_e  \Big(\sup\limits_{\boldsymbol w_h\in \boldsymbol U_h}\frac{a_h(\bm u_I-\bm u,\bm w_h)}{\|\bm w_h\|_{\bm U_h}}+\sup\limits_{\boldsymbol z_h\in \boldsymbol V_h}\frac{(R_p^{-1}(\boldsymbol v_I- \boldsymbol v), \boldsymbol  z_h)}{\|\bm z_h\|_V}+\sup\limits_{q_h\in P_h}\frac{(\div (\boldsymbol v- \boldsymbol v_I),q_h)}{\|q_h\|_P}\Big).
\end{equation*}
Hence, using the boundedness of $a_h(\cdot, \cdot)$, {the second inequality in Lemma \ref{stable:Interpolation}},
and triangle inequality, {we arrive at} 
\begin{equation} \label{eq:error9}
\|\bm u-\bm u_h\|_{\bm U_h}+\|\bm v-\bm v_h\|_{\bm V}\leq C_{e,u}  \inf\limits_{\boldsymbol w_h\in \boldsymbol U_h, \bm z_h\in \bm V_h}\Big(\|\bm u-\bm w_h\|_{\bm U_h}+\|\bm v-\bm z_h\|_{\bm V}\Big),
\end{equation}
and 
\begin{equation} \label{eq:error10}
\|p-p_h\|_P\leq C_{e,p}  \inf\limits_{\boldsymbol w_h\in \boldsymbol U_h, \bm z_h\in \bm V_h, q_h\in P_h}\Big(\|\bm u-\bm w_h\|_{\bm U_h}+\|\bm v-\bm z_h\|_{\bm V}+\|p-q_h\|_P\Big).
\end{equation}
\end{proof}
\begin{remark}
From the above theorem, we can see that the discretizations are locking-free.
%when the material goes to nearly impressible.
\end{remark}

\section{Conclusions}\label{conclusion}
{This paper presents the stability analysis of a classical three-field formulation of Biot's consolidation
model where the unknown variables are the displacements, fluid flux (Darcy velocity), and pore pressure. 
Specific parameter-dependent norms provide the key to establish the parameter-robust stability of the continuous
problem.
{This allows for the construction of a parameter-robust block diagonal preconditioner in the framework of operator preconditioning.}
Discretizations {that fully preserve} the fluid mass conservation are designed. Further, both discrete parameter-robust
stability and locking-free error estimates are proved. 
%That will be our ongoing work on the way.
}

%\section{$P_2\times RT_0\times P_0$ is uniform stable for $\tau$}
%$\lambda =1,c_{up}=1, 0\le c_pp=c_{up}/{\lambda},K=1,\tau $ goes to zero.
%The norm has to be changed and a counterexample can be given to show that the $H(\div)$ inf-sup condition does not work any more. But it is still stable.
%\section{$P_2\times RT_0\times P_0$ is uniform for $\tau$ and $\lambda$}
%$\lambda$ goes to infinity, $c_{up}=1, 0\le c_{pp}=c_{up}/{\lambda},K=1,\tau $ goes to zero.
%The norm has to be changed such that the Stokes inf-sup condition to be satisfied.
%\section{$P_2\times RT_0\times P_0$ is uniform for the scaled norms}
%$1\le\lambda$ goes to infinity, $0<c_{up}<\infty, 0\le c_{pp}=c_{up}/{\lambda},K=1,\tau $ goes to zero.

\bibliographystyle{unsrt}
\bibliography{reference_pro}

\end{document}